\theoremstyle{plain}
\newtheorem{assumption}{Assumption}
\newtheorem{main}{Theorem}
\newtheorem{lemma}{Lemma}
\newtheorem{proposition}{Proposition}
\numberwithin{equation}{section}
\theoremstyle{definition}
\newtheorem{definition}{Definition}
\theoremstyle{remark}
\newtheorem*{remark}{Remark}
\newtheorem*{acknowledgment}{Acknowledgment}
\title{ Convergence of global solutions for some classes of nonlinear damped wave equations}
\author{Zhe Jiao \\ \emph{School of Mathematical Science, Fudan University} \\ \emph{Handan Road 220, 200433 Shanghai, P. R. China} \\ \emph{Email: zhejiao@yahoo.com}}
\date{August 6, 2013}
\begin{document}
\maketitle

\pagestyle{plain}

\def\abstractname{\textbf{Abstract}}
\begin{abstract}
  We consider the asymptotic behavior of the solution to the wave equation with time-dependent damping and analytic nonlinearity. Our main goal is to prove the convergence of a global solution to an equilibrium as time goes to infinity by means of a suitable {\L}ojasiewicz-Simon type inequality. A generalization and examples of applications will be given at the end of the paper.
\end{abstract}

Keyword: damped nonlinear wave equations, integrally positive damping, on-off damping.
\\

\section{Introduction}

In this paper, we are concerned some classes of nonlinear abstract damped wave equations, whose prototype is the usual wave equation in a bounded open domain $\mathbb{R}^{N}$, $N\geq1$,
\begin{eqnarray}
\left
    \{
        \begin{array}{ll}
            \ddot{u} + h(t)\dot{u} - \Delta(u) = f(u) &\textrm{in $\mathbb{R}^{+} \times \Omega$;} \\
                                             u = 0 &\textrm{on $\mathbb{R}^{+} \times\partial\Omega$;}\\
                                        u(0,x) = u_{0}(x) &\textrm{in $\Omega$;}\\
                                  \dot{u}(0,x) = u_{1}(x) &\textrm{in $\Omega$.} \label{1}
        \end{array}
\right.
\end{eqnarray}
where $h$, $f$ are suitably given.

This problem has been already investigated by many authors.
Concerning the damping $h$, different assumptions are alternatively made: on-off(\cite{1}, \cite{7}), increasing(\cite{12}), bounded or constant(\cite{13}, \cite{14}, \cite{2}, \cite{3}), integrally positive(\cite{1}, \cite{11}), etc. In particular, on-off dampers are suitable to describe a wide variety of communication network models, as well as systems where a control depending on time is necessary.

In the earlier papers, When $h(t)$ is a constant, there has been some results about the
asymptotics for the equations, such as M. Jendoubi \cite{8}, A.
Haraux and M. Jendoubi \cite{2}, \cite{3}.
Moreover, convergence to an equilibrium has been
established in many case, when the damping term $Q = g(\dot{u})$, which is linear or nonlinear without being dependent on time.
Especially, in the paper \cite{16}, the authors considered the general form. The key point is that all these papers used an inequality,
so-called \emph{{\L}ojasiewicz-Simon} inequality, to obtain their results. However, it must assume the nonlinearity $f(s)$ is
analytic with respect to $s$.

When the nonlinearity $f\equiv 0$, the papers \cite{7}, \cite{10} have
obtained the energy inequalities by doing research to the
damping term $Q = g(t, \dot{u})$ in detail. In \cite{10} where $Q =
\rho(t)g(\dot{u})$, the author give a classification of the
behaviors of the damping near the origin and at the infinity, and
introduce some auxiliary functions to determine the rate of decay of
the energy functional. However, In \cite{9} where $Q = g(t, \dot{u})$,
assumes growth conditions at infinity. Very interesting results in the special case $f\equiv 0$, and damping of type on-off
can be found in \cite{7}, also when the term $h(t)\dot{u}$ is replaced by $h(t)g(\dot{u})$, where $g$ is a nonlinear function with linear growth(see also \cite{9}). In addition, A logarithmic decay estimate is proved in \cite{15} when the term $h(t)\dot{u}$ is replaced by $(1+t)^{\theta}a(x)g(\dot{u})$, with $a$ bounded and positive on a subdomain of $\Omega$ and $g$ possibly having superlinear growth at infinity.

In \cite{11}, the author shows that, if $f$ has linear growth and $h$ is integrally positive, then any solution $u$ of problem (\ref{1}) converges to 0 in the norm $\|\bigtriangledown u\|_{L^{2}} + \|u_{t}\|_{L^{2}}$ if and only if
\begin{eqnarray*}
    \int_{0}^{\infty}e^{-H(t)}\int_{0}^{t}e^{H(s)}dsdt=+\infty,
\end{eqnarray*}
with $H(t)=\int_{0}^{t}h(s)ds$.

In this paper we prove global stability for problem (\ref{1}) when $h$ is integrally positive,  and also $f$ satisfies sign condition and regularity assumption. This result is interesting because $h$ does not need to satisfy any other condition, and no growth condition on $f$ is required. However our result is only applicable to strong solution and the condition of the trajectory of the solution is bounded in $W^{2, p}(\Omega) \times W^{1, p}(\Omega)$, with $p > \frac{N}{2}$ is restrictive, and not always easy to check in practice. For damping of type on-off, we make a special assumption( see Theorem 2), which is sufficient for the stability. Unfortunately, we still do not know if this assumption is also necessary for stability to hold.

The plan of this paper is as follows: In Section 2 we give some definitions and assumptions, and in Section 3 we state our main
result, and give the proof. And we will obtain the decay rates. Section 4 is devoted to a generalization of our results. In each section some remarks are presented.

\begin{acknowledgment}
    The author thanks the referees and Prof. Ti-Jun Xiao for their valuable suggestions concerning the presentation of our results.
\end{acknowledgment}

\section{Preliminaries}
Let us begin with the following definition and assumptions.
\begin{definition}
A function $h$: $[0, +\infty)\rightarrow [0, +\infty)$ is said \textit{integrally positive} if for every $\epsilon > 0$ there exist $\delta > 0$ such that
\[
    \int_{t}^{t+\epsilon}h(s)\mathrm{d}s \geq \delta,  \forall t > 0.
\]
\end{definition}
\begin{remark}
    We emphasize the fact that the function $h$ may vanish somewhere but not on any interval according to this definition.
    In addition, we can easily get that there exist a constant $\kappa > 0$ such that $h(t) > \kappa$ for almost every $t \in \mathbb{R}$.
\end{remark}

\begin{definition}
Assume there is a sequence $(J_{n})_{n\geq1}$ of disjoint open intervals in $(0, +\infty)$, $J_{n} = (a_{n}, b_{n})$, where $a_{1} = 0$, $b_{n} = a_{n+1}$ and $a_{n} \rightarrow +\infty$, if $h$: $[0, +\infty)\rightarrow \mathbb{R}$ satisfies: for all $t > J_{n}$, there exist $0 < m_{n} \leq M_{n} < \infty$ such that
\begin{eqnarray*}
    m_{n} \leq h(t) \leq M_{n},
\end{eqnarray*}
we call $h(t)$ is in \textit{the positive-negative} case.
\end{definition}

\begin{remark}
    Notice that this kind of intermitting damping may change sign at the discontinuous points. If all the discontinuous points $h(b_{n}) = 0$, we call this damping is in \textit{on-off} case.
\end{remark}

For the nonlinearity $f$, we note that the sign of $f$ looks quite important. Indeed, it is well known that solutions of $\ddot{u} + h(t)\dot{u} - \Delta(u) = u^{3}$, $h(t)\geq 0$, may blow up in finite time( see \cite{6}, \cite{17}).
\begin{assumption}[Sign assumption]
    Assume $f$ satisfies
    \[
        sf(s) \leq 0, \forall s \in \mathbb{R}
    \]
\end{assumption}

From the sign assumption, we can easily have
\[
    F(s)\triangleq \int_{0}^{s}f(\tau)\mathrm{d}\tau \leq 0, \forall s \in \mathbb{R}.
\]

\begin{assumption}[Regularity assumption]
    Assume $f$ satisfies
        \begin{itemize}
            \item $f(s)$ is analytic in $s$;
            \item $f$, $ f' $ and $ f'' $are bounded in $(-\beta, +\beta)$ for all $\beta > 0$;
        \end{itemize}
\end{assumption}


For each solution $u$ of problem (\ref{1}), we define its energy by
\begin{eqnarray}
    E_{u}(t) =  \int_{\Omega}\frac{1}{2}( |u_{t}|^{2} + |\nabla(u)|^{2} ) - F(u)\mathrm{d}x \label{2}
\end{eqnarray}
where $F(u)\triangleq \int_{0}^{u}f(s)\mathrm{d}s$. In addition, we denote
\begin{eqnarray}
    e_{u}(t)=\int_{\Omega}\frac{1}{2}|\nabla(u)|^{2}  - F(u)\mathrm{d}x. \label{3}
\end{eqnarray}
If there is no need to specify $u$, we simplify $E_{u}(t), e_{u}(t)$ by $E(t), e(t)$ respectively.

Finally, we use the letter $C$ below to denote corresponding constants, and also denote
\begin{eqnarray*}
    \mathcal{H} =  H_{0}^{1}(\Omega) \times L^{2}(\Omega)
\end{eqnarray*}
which is often referred to as the natural energy space.
\begin{eqnarray*}
    \mathcal{D} = \{ (u, v)^{T} \in H^{2}(\Omega) \times H^{1}(\Omega):  u\mid _{\textrm{$\partial\Omega$}} = 0 \}
\end{eqnarray*}
which is clearly a closed subspace of $H^{2}(\Omega) \times H^{1}(\Omega)$.

In our setting before we will obtain the following results. The proof is an adaptation to the one given therein and is thus omitted.

\begin{proposition}
    If $(u_{0}, u_{1})^{T} \in \mathcal{D}$,  Then problem (\ref{1}) has a unique solution $(u, u_{t})^{T}$, and we have
        \begin{eqnarray*}
                (u, u_{t})^{T} \in C([0, T]; \mathcal{D})\cap C^{1}([0, T]; \mathcal{H});
        \end{eqnarray*}
        And there exists a constant $C > 0$ such that
        \begin{eqnarray*}
                \| (u, u_{t})^{T} \|_{\mathcal{D}} \leq K,
        \end{eqnarray*}
        with $K$ only depending on $\| (u_{0}, u_{1})^{T} \|_{\mathcal{D}}$ .
\end{proposition}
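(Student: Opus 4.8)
The plan is to recast (\ref{1}) as a semilinear non-autonomous Cauchy problem on the energy space and then to combine an abstract local existence theorem with an a priori energy bound. Setting $U=(u,u_t)^T$, equation (\ref{1}) reads
\[
    U'(t) = A\,U(t) + B(t)\,U(t) + \Phi(U(t)), \qquad U(0)=(u_0,u_1)^T ,
\]
where $A=\begin{pmatrix} 0 & I \\ \Delta & 0 \end{pmatrix}$ with domain $D(A)=\mathcal{D}$, the damping is the multiplication $B(t)(u,v)^T=(0,-h(t)v)^T$, and the nonlinearity is $\Phi((u,v)^T)=(0,f(u))^T$. It is classical that $A$ generates a strongly continuous group of isometries $\{e^{tA}\}$ on $\mathcal{H}$; since $\mathcal{D}=D(A)$, the part of $A$ in $\mathcal{D}$ again generates a $C_0$-group on $(\mathcal{D},\|\cdot\|_{H^2\times H^1})$.

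First I would check that $\Phi$ maps $\mathcal{D}$ into $\mathcal{D}$ and is Lipschitz on bounded sets. From the sign assumption and continuity one gets $f(0)=0$, so $|f(s)|\le\|f'\|_\infty|s|$ yields $f(u)\in L^2(\Omega)$, while $\nabla f(u)=f'(u)\nabla u\in L^2(\Omega)$ because $f'$ is bounded; hence $f(u)\in H^1(\Omega)$. The local Lipschitz bound follows from boundedness of $f'$ and $f''$ on bounded intervals together with the Sobolev embedding $H^2(\Omega)\hookrightarrow L^\infty(\Omega)$ (equivalently the $W^{2,p}$ embedding for $p>N/2$ used in the introduction), which controls the critical term $\|(f'(u_1)-f'(u_2))\nabla u_2\|_{L^2}\le\|f''\|_\infty\|u_1-u_2\|_{L^\infty}\|\nabla u_2\|_{L^2}$. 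As $h$ is locally integrable, $B(t)$ is a locally integrable family of operators bounded on both $\mathcal{H}$ and $\mathcal{D}$, so $A+B(t)$ generates an evolution family $\{\mathcal{U}(t,s)\}$ via the Duhamel (Dyson--Phillips) series, which leaves $\mathcal{D}$ invariant. A contraction-mapping argument on the integral equation
\[
    U(t)=\mathcal{U}(t,0)U_0+\int_0^t\mathcal{U}(t,s)\,\Phi(U(s))\,ds
\]
in $C([0,\tau];\mathcal{D})$ then produces a unique local strong solution with the asserted regularity $(u,u_t)^T\in C([0,\tau];\mathcal{D})\cap C^1([0,\tau];\mathcal{H})$.

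Next I would globalize and extract the bound. Differentiating the energy (\ref{2}) along the solution and integrating by parts (using $u_t|_{\partial\Omega}=0$) gives $E'(t)=-h(t)\|u_t\|_{L^2}^2\le0$, so $E(t)\le E(0)$; since $F\le0$ by the sign assumption, this immediately controls the $\mathcal{H}$-norm, $\|\nabla u\|_{L^2}^2+\|u_t\|_{L^2}^2\le 2E(0)$. This uniform lower-order bound forbids blow-up of the energy norm and, by the usual continuation principle, lets the local solution be extended to every interval $[0,T]$. To upgrade to the $\mathcal{D}$-bound I would invoke elliptic regularity $\|u\|_{H^2}\le C(\|\Delta u\|_{L^2}+\|u\|_{L^2})$ and read $\Delta u=u_{tt}+h(t)u_t-f(u)$ off the equation, thereby reducing the $H^2$-estimate to controlling $\|u_{tt}\|_{L^2}$ and $\|\nabla u_t\|_{L^2}$ through a second-order energy for $w=u_t$.

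The main obstacle will be exactly this higher-order estimate. Formally $w=u_t$ solves
\[
    w_{tt}+h(t)w_t+h'(t)w-\Delta w=f'(u)\,w ,
\]
and testing with $w_t$ produces the troublesome term $h'(t)\int_\Omega w\,w_t\,dx$, which is unavailable when $h$ is merely integrable or, as in the on-off case, discontinuous. I would avoid differentiating $h$ by working with time difference quotients (or by running the estimate on each interval $J_n$ where $h$ is regular and tracking the jump contributions), bounding $f'(u)$ in $L^\infty$ via the Sobolev embedding and the $\mathcal{H}$-bound already in hand, and closing a Gr\"onwall inequality for the second-order energy. Combined with elliptic regularity this gives $\|(u,u_t)^T\|_{\mathcal{D}}\le K$; as the author notes, the whole argument is a routine adaptation of the cited well-posedness theory, the non-smoothness of the damping being the only genuinely delicate point.
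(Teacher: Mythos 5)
The paper does not actually prove this proposition: the proof is omitted with a pointer to \cite{18}, where the analogous result is established (for the undamped problem) and the remark that the method adapts. So there is no line-by-line comparison to make; judged on its own terms, your semigroup framework is the natural attack, and its first half is sound: $A$ generates an isometry group on $\mathcal{H}$ and on $\mathcal{D}$ equipped with the graph norm, $f(0)=0$ follows from the sign assumption, $\Phi$ is locally Lipschitz on $\mathcal{D}$ (granted $H^{2}\hookrightarrow L^{\infty}$, i.e. $N\le 3$, or the $W^{2,p}$, $p>N/2$ setting you invoke), and the fixed-point argument yields a unique local solution with the stated regularity.

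The genuine gap is the a priori bound in $\mathcal{D}$, and your proposed repairs of the ``troublesome term'' do not work. Difference quotients do not avoid differentiating $h$: the quotient $(h(t+\tau)-h(t))/\tau$ reappears multiplying $u_{t}$ and is exactly as singular as $h'$ --- for on-off damping it is of size $1/\tau$ near each switch, and the jump contributions you would ``track'' across the intervals $J_{n}$ neither telescope nor sum. The correct move is already contained in your own setup but you abandon it: never differentiate the damping, keep it in the linear part. Since $B(t)$ is dissipative also for the graph inner product of $\mathcal{D}$ (one computes $\langle B(t)U,U\rangle_{\mathcal{D}}=-h(t)\bigl(\|v\|_{L^{2}}^{2}+\|\nabla v\|_{L^{2}}^{2}\bigr)\le 0$), the evolution family $\mathcal{U}(t,s)$ is contractive on $\mathcal{D}$, and Duhamel gives $\|U(t)\|_{\mathcal{D}}\le\|U_{0}\|_{\mathcal{D}}+\int_{0}^{t}\|f(u(s))\|_{H^{1}}\,\mathrm{d}s$ with no reference to $h'$ at all; this also removes the circularity in your continuation step, which as written invokes the continuation principle for the $\mathcal{D}$-valued solution before the $\mathcal{D}$-bound it requires. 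Even after this repair a second gap remains: Gr\"onwall/Duhamel produces a bound growing with $T$ (at best exponentially), whereas the proposition asserts a constant $K$ depending only on $\|(u_{0},u_{1})^{T}\|_{\mathcal{D}}$, uniformly in time. A time-uniform $H^{2}\times H^{1}$ bound needs dissipation to act at the level of the higher-order energy, and neither your sketch (nor, it should be said, the paper's bare citation) supplies that step for merely integrally positive or on-off $h$.
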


\begin{remark}
    For the proof of this proposition, one can refer to \cite{18}. In \cite{18}, the authors proved these results for problem (\ref{1}) without the damping term. But the method of proving our results here is similar.
\end{remark}

\begin{proposition}
    For any solution $(u, u_{t})^{T}$ of problem (\ref{1}) we have
    \[
        E_{u}'(t) = - \int_{\Omega}h(t)u_{t}^{2}\mathrm{d}x,
    \]
    which is non-positive if $h \geq 0$.
\end{proposition}

\section{Main result}

Now we can state our first fundamental results.
\begin{main}
    Assume $h$ is integrally positive and $f$ satisfies Assumption 1 and 2. Let $(u, u_{t})^{T}$ be a solution of problem (\ref{1}) with $(u_{0}, u_{1})^{T} \in \mathcal{D}$, and there exists $p \geq 2$ such that its trajectory is precompact in $W^{2, p}(\Omega) \times W^{1, p}(\Omega)$, with $p > \frac{N}{2}$ if $N \leq 6$ and $p > N$ if $ N > 6$. Then there exists a equilibrium $(\varphi, 0)$, with $\varphi$ in the set
    \begin{eqnarray}
        \Sigma = \{ \varphi \in H^{2}(\Omega)\cap H^{1}_{0}(\Omega):  - \Delta \varphi = f(\varphi) \}, \label{4}
    \end{eqnarray}
    such that
    \begin{eqnarray*}
        \lim_{t \rightarrow \infty}{\|u_{t}\|_{L^{2}} + \|u - \varphi\|_{W^{2, p}}} = 0.
    \end{eqnarray*}
    And there exists $\theta = \theta(\varphi) \in (0, \frac{1}{2}]$ such that
    \begin{itemize}
        \item if $0 < \theta < \frac{1}{2}$, then
                \begin{eqnarray*}
                    \|u - \varphi\|_{H^{1}} + \|u_{t}\|_{L^{2}} = o(t^{-\frac{\theta}{1-2\theta}}), t \rightarrow \infty
                \end{eqnarray*}
        \item if $\theta = \frac{1}{2}$, then
                \begin{eqnarray*}
                    \|u - \varphi\|_{H^{1}} + \|u_{t}\|_{L^{2}} = o(e^{-\zeta t}), t \rightarrow \infty
                \end{eqnarray*}
                with $\zeta > 0$.
    \end{itemize}

\end{main}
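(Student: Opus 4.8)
The plan is to run a Łojasiewicz–Simon argument adapted to the time-dependent damping, the integral positivity of $h$ being the device that replaces the coercive dissipation of the autonomous case. First I would record the Lyapunov structure. By Proposition 2, $E_u$ is non-increasing; since $F\leq 0$ forces $E_u(t)\geq \frac12\|u_t\|_{L^2}^2+\frac12\|\nabla u\|_{L^2}^2\geq 0$, the energy converges to some $E_\infty\geq 0$, and integrating Proposition 2 gives $\int_0^\infty\int_\Omega h(t)u_t^2\,dx\,dt = E_u(0)-E_\infty<\infty$. Precompactness of the trajectory in $W^{2,p}(\Omega)\times W^{1,p}(\Omega)$ makes the $\omega$-limit set $\omega(u_0,u_1)$ nonempty, compact and connected in that space.

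The first genuinely new step is to pass from $\int_0^\infty\int_\Omega h u_t^2<\infty$ to $\|u_t(t)\|_{L^2}\to 0$, and here the integral positivity is essential. Writing the equation as $u_{tt}=\Delta u+f(u)-h(t)u_t$ and using precompactness (so $\Delta u,f(u)$ are bounded in $L^2$, the latter by Assumption 2 together with the embedding $W^{2,p}\hookrightarrow L^\infty$) I would control $\frac{d}{dt}\|u_t\|_{L^2}^2$ and show $t\mapsto\|u_t(t)\|_{L^2}$ is uniformly continuous; if $\|u_t(t_n)\|_{L^2}\geq\eta$ along some $t_n\to\infty$, uniform continuity gives windows of fixed length $\epsilon$ on which $\|u_t\|_{L^2}\geq\eta/2$, and Definition 1 forces $\int_{t_n}^{t_n+\epsilon}\int_\Omega h u_t^2\geq(\eta/2)^2\delta$ on infinitely many disjoint windows, contradicting finiteness of the dissipation. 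With $\|u_t\|_{L^2}\to 0$, any $(\varphi,\psi)\in\omega(u_0,u_1)$ has $\psi=0$, and passing to the limit in the equation averaged over unit windows — again using integral positivity to kill the $\int h u_t$ contribution — identifies $\varphi$ as a solution of $-\Delta\varphi=f(\varphi)$, i.e. $\varphi\in\Sigma$. Since $E_u(t)=\frac12\|u_t\|_{L^2}^2+e_u(t)$, one gets $e_u(t)\to E_\infty$ and $e_\varphi=E_\infty$ for every such $\varphi$.

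Now I would fix $\varphi\in\omega(u_0,u_1)$ and invoke the Łojasiewicz–Simon inequality for the analytic functional $\mathcal{E}(v)=e_v=\int_\Omega\frac12|\nabla v|^2-F(v)\,dx$, whose gradient is $\mathcal{E}'(v)=-\Delta v-f(v)$: there exist $\theta\in(0,\frac12]$, $\sigma>0$, $C>0$ with $|\mathcal{E}(v)-\mathcal{E}(\varphi)|^{1-\theta}\leq C\|\mathcal{E}'(v)\|_{H^{-1}}$ whenever $\|v-\varphi\|_{H^1}<\sigma$ (the analyticity and the bounds on $f,f',f''$ in Assumption 2 supply the needed real-analytic/Fredholm structure). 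For $t$ large enough that $u(t)$ stays in this neighborhood I would bound $(E_u(t)-E_\infty)^{1-\theta}\leq C(\|u_t\|_{L^2}+\|\mathcal{E}'(u)\|_{H^{-1}})$, using $\|u_t\|_{L^2}^{2(1-\theta)}\leq\|u_t\|_{L^2}$ for $\theta\leq\frac12$ and $\|u_t\|_{L^2}$ small, and then close a Jendoubi-type differential inequality by combining $-E_u'=\int_\Omega h u_t^2$ with the auxiliary quantity $\langle u_t,u-\varphi\rangle$, whose time derivative produces $\langle\mathcal{E}'(u),u-\varphi\rangle$ and hence the potential-energy excess. Integrated over the damping windows, this yields $-\frac{d}{dt}(E_u-E_\infty)^\theta\gtrsim\|u_t\|_{L^2}$ in an averaged sense, so $\|u_t\|_{L^2}\in L^1(0,\infty)$; consequently $u(t)$ is Cauchy in $L^2$, converges to a single $\varphi\in\Sigma$, and the convergence upgrades to $W^{2,p}$ by precompactness.

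The decay rates follow from the same inequality, which reduces as in the autonomous theory to $-\frac{d}{dt}(E_u-E_\infty)\gtrsim(E_u-E_\infty)^{2(1-\theta)}$, whose integration gives $E_u(t)-E_\infty=O(t^{-1/(1-2\theta)})$ for $\theta<\frac12$ and exponential decay for $\theta=\frac12$; since $\|u(t)-\varphi\|_{H^1}+\|u_t\|_{L^2}\lesssim\int_t^\infty\|u_s\|_{L^2}\,ds\lesssim(E_u(t)-E_\infty)^\theta$, the stated $o(t^{-\theta/(1-2\theta)})$ and $o(e^{-\zeta t})$ estimates result. The main obstacle throughout is precisely the $t$-dependence of the damping: everywhere the autonomous proof uses the coercive term $\|u_t\|_{L^2}^2$, here only the weighted quantity $\int_\Omega h(t)u_t^2$ is available, so the window-averaging afforded by Definition 1 must be inserted into both the $u_t\to 0$ step and the Łojasiewicz differential inequality; a secondary difficulty is controlling $\|\mathcal{E}'(u)\|_{H^{-1}}$, equivalently $\|u_{tt}+h u_t\|_{H^{-1}}$, uniformly in time, which is where the strong precompactness hypothesis in $W^{2,p}\times W^{1,p}$ is used.
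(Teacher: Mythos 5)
Your skeleton matches the paper's: finiteness of the dissipation integral $\int_0^\infty\int_\Omega h\,u_t^2$, the uniform-continuity-plus-integral-positivity argument giving $\|u_t\|_{L^2}\to 0$ (this is exactly the paper's Lemma 1, and your streamlined version of it is sound), identification of $\omega$-limit points with elements of $\Sigma$, a {\L}ojasiewicz--Simon inequality, a perturbed energy functional, and an ODE lemma for the rates. The genuine gap is in the step that actually drives convergence: the auxiliary quantity $\langle u_t, u-\varphi\rangle$ cannot close the {\L}ojasiewicz argument. Its time derivative produces $-\langle \mathcal{E}'(u), u-\varphi\rangle$, and you assert this yields ``the potential-energy excess''; but by Taylor expansion $\langle \mathcal{E}'(u), u-\varphi\rangle = (e_u-e_\varphi) + \tfrac12\langle \mathcal{E}''(\xi)(u-\varphi), u-\varphi\rangle$, and the quadratic remainder $\|\nabla(u-\varphi)\|_{L^2}^2 - \int_\Omega f'(\xi)(u-\varphi)^2\,dx$ is not controlled by the excess. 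Precisely in the degenerate case $0<\theta<\tfrac12$ (nontrivial kernel of $-\Delta-f'(\varphi)$, the only case in which the {\L}ojasiewicz inequality is needed at all), $\mathcal{E}'(u)$ can be nearly orthogonal to $u-\varphi$, so $\langle\mathcal{E}'(u),u-\varphi\rangle$ may vanish or be negative while $e_u-e_\varphi>0$; no bound $\langle\mathcal{E}'(u),u-\varphi\rangle\ge c\,(e_u-e_\varphi)$ holds without convexity. What the argument requires is that the derivative of the perturbed functional contain the gradient norm squared, $-\|\mathcal{E}'(u)\|^2$ (in $L^2$ or $H^{-1}$), so that the inequality $|e_u-e_\varphi|^{1-\theta}\le \|\mathcal{E}'(u)\|$ can be inserted. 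This is why the paper couples $u_t$ with the gradient itself: its functional \eqref{23} contains $-\varepsilon^2\int_\Omega(\Delta u+f(u))u_t\,dx$ together with the corrector terms $\varepsilon\|\nabla u_t\|_{L^2}^2+\varepsilon\|\Delta u+f(u)\|_{L^2}^2-\varepsilon\int_\Omega f'(u)u_t^2\,dx$, and the whole point of Step 2 is the resulting bound \eqref{24}, $-H'\gtrsim \|u_t\|_{H^1}^2+\|\Delta u+f(u)\|_{L^2}^2$, which is exactly what pairs with Lemma 2, inequality \eqref{19} (stated in $L^2$, valid in a $W^{2,p}$-neighborhood). If you prefer your $H^{-1}$ version of the inequality, the workable coupling is $\langle u_t,\mathcal{E}'(u)\rangle_{H^{-1}}$ in the style of Haraux--Jendoubi, not $\langle u_t,u-\varphi\rangle$; the two differ by $\langle u_t,(-\Delta)^{-1}(f(u)-f(\varphi))\rangle$, and that difference is not negligible.

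Second, the point where you claim $-\frac{d}{dt}(E_u-E_\infty)^\theta\gtrsim\|u_t\|_{L^2}$ ``in an averaged sense'' after integrating over damping windows is exactly the hard part of the time-dependent problem, and it is left as an assertion. In the derivative of any perturbed functional the dissipation $-\int_\Omega h\,u_t^2$ carries the weight $h$ while the positive $\varepsilon$-terms do not; on sets where $h$ is small the derivative can be positive, and converting this into a window-averaged differential inequality compatible with the neighborhood-exit argument (the supremum $\overline{t}$ in the paper's Step 3) requires substantial work that is not sketched. Note the paper does not do this either: it invokes its Remark after Definition 1 --- that integral positivity forces $h(t)>\kappa$ for a.e.\ $t$ --- and then runs all of Step 2 pointwise with $h\ge\kappa$. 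Whether that remark is correct is a separate criticism of the paper (it fails, e.g., for $h(t)=|\sin t|$), but it is the paper's route; your proposal replaces it with an unproved claim at the same spot. Finally, a smaller slip: $\int_t^\infty\|u_s\|_{L^2}\,ds$ controls $\|u(t)-\varphi\|_{L^2}$, not $\|u_t(t)\|_{L^2}$, so your last display does not give the stated rate for $\|u_t\|$; the paper obtains that rate by also integrating $\|u_{tt}\|_{L^2}\lesssim\|u_t\|_{L^2}+\|\Delta u+f(u)\|_{L^2}$ in time and using $\|u_t(t)\|_{L^2}\le\int_t^\infty\|u_{tt}\|_{L^2}\,ds$, which is available because $\|u_t\|_{L^2}\to 0$.
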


\begin{remark}
    In fact, we can consider $f(x, s)$ which is analytic in $s$, uniformly with respect to $x$
\end{remark}

Before our proof of theorem 1, let us give some Lemmas.

\begin{lemma}
    Under Assumption 1, let $(u, u_{t})^{T}$ be a solution of problem (\ref{1}), we can obtain
        \begin{eqnarray}
            \lim_{t \rightarrow \infty}\|u_{t}\|_{L^{2}} = 0.  \label{5}
        \end{eqnarray}
\end{lemma}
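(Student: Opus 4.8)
The plan is to prove that the kinetic energy $\|u_t\|_{L^2}^2$ vanishes as $t\to\infty$ by combining the energy dissipation identity from Proposition 2 with the integral positivity of $h$. The starting point is the dissipation law $E_u'(t) = -\int_\Omega h(t)u_t^2\,\mathrm{d}x$. Under the Sign assumption we have $F(s)\le 0$, so the energy $E_u(t)=\int_\Omega \tfrac12(|u_t|^2+|\nabla u|^2)-F(u)\,\mathrm{d}x$ is bounded below by $0$; since $E_u$ is nonincreasing (because $h\ge 0$ a.e. by the Remark following Definition 1) it converges to a finite limit $E_\infty\ge 0$. Integrating the dissipation identity in time then yields the crucial summability estimate
\begin{eqnarray*}
    \int_0^\infty h(t)\,\|u_t(t)\|_{L^2}^2\,\mathrm{d}t = E_u(0)-E_\infty < \infty.
\end{eqnarray*}

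Next I would extract decay of $\|u_t\|_{L^2}$ itself. The weighted integral above does not directly control $\|u_t\|_{L^2}$, because $h$ may vanish on a sparse set; this is exactly where integral positivity must do its work, and I expect it to be the main obstacle. The idea is to fix $\epsilon>0$ and use the corresponding $\delta>0$ from Definition 1 so that $\int_t^{t+\epsilon}h(s)\,\mathrm{d}s\ge \delta$ on every window of length $\epsilon$. On such a window one cannot have $\|u_t(s)\|_{L^2}$ uniformly large while $\int h\,\|u_t\|^2$ stays summable, but the two quantities are decoupled in time, so one needs an equicontinuity / uniform-continuity argument to transfer smallness of the weighted integral to smallness of $\|u_t\|_{L^2}$. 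Concretely, I would show $t\mapsto \|u_t(t)\|_{L^2}$ is uniformly continuous: from the precompactness hypothesis (or directly from Proposition 1) the trajectory is bounded in $\mathcal{D}$, and differentiating $\tfrac12\|u_t\|_{L^2}^2$ and using the equation $u_{tt}=\Delta u + f(u) - h(t)u_t$ gives a bound on $\frac{\mathrm d}{\mathrm dt}\|u_t\|_{L^2}^2$ in terms of the (bounded) higher norms, so $\|u_t\|_{L^2}^2$ is Lipschitz in $t$ away from the vanishing set of $h$, with controlled constants.

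With uniform continuity in hand I would argue by contradiction: if $\|u_t(t_n)\|_{L^2}\ge \eta>0$ along a sequence $t_n\to\infty$, then by uniform continuity $\|u_t(s)\|_{L^2}\ge \eta/2$ on intervals $[t_n, t_n+\tau]$ of fixed length $\tau$. Choosing $\epsilon=\tau$ in the definition of integral positivity, the weight $h$ accumulates mass at least $\delta$ on each such interval, so
\begin{eqnarray*}
    \int_{t_n}^{t_n+\tau} h(s)\,\|u_t(s)\|_{L^2}^2\,\mathrm{d}s \ge \frac{\eta^2}{4}\,\delta,
\end{eqnarray*}
and since the $t_n$ can be taken to give disjoint intervals, summing these lower bounds forces $\int_0^\infty h\,\|u_t\|_{L^2}^2\,\mathrm{d}t=+\infty$, contradicting the summability estimate. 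Hence $\|u_t(t)\|_{L^2}\to 0$, which is (\ref{5}). The delicate point throughout is justifying the uniform continuity of $\|u_t\|_{L^2}^2$ with constants independent of $t$; this is where the boundedness of the trajectory in $\mathcal{D}$ from Proposition 1, together with the boundedness of $f$ on bounded sets from the Regularity assumption, is essential.
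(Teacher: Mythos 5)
Your overall strategy---derive the summability estimate $\int_0^\infty h(t)\|u_t(t)\|_{L^2}^2\,\mathrm{d}t<\infty$ from the energy identity, then contradict it by exhibiting infinitely many disjoint intervals of fixed length on which $\|u_t\|_{L^2}$ is bounded below and on which $h$ has integral at least $\delta$---is exactly the mechanism of the paper's proof. But the step you yourself flag as delicate is where the argument genuinely breaks: $t\mapsto\|u_t(t)\|_{L^2}^2$ is \emph{not} uniformly continuous under the stated hypotheses. Indeed
\begin{equation*}
\frac{\mathrm{d}}{\mathrm{d}t}\|u_t\|_{L^2}^2 \;=\; 2\langle u_t,\Delta u + f(u)\rangle \;-\; 2h(t)\|u_t\|_{L^2}^2 ,
\end{equation*}
and nothing controls the last term from below: integral positivity bounds $\int_t^{t+\epsilon}h$ from \emph{below}, never from above, and $h$ is not assumed bounded (Definition 1 admits, e.g., $h(t)=1+\sum_n 2^n\chi_{[n,\,n+2^{-n}]}$). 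Across such a tall thin spike the damping multiplies $\|u_t\|_{L^2}^2$ by roughly $e^{-2\int h}$, so the velocity norm can drop by a fixed factor in arbitrarily short time; hence "$\|u_t(t_n)\|\ge\eta$ implies $\|u_t\|\ge\eta/2$ on $[t_n,t_n+\tau]$" is unjustified, and your forward-in-time intervals may simply fail to exist. Note also that your hedge "Lipschitz away from the vanishing set of $h$" misdiagnoses the obstruction: where $h$ vanishes the derivative is perfectly bounded; the problem is where $h$ is large.

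The gap is repairable, and the repair is precisely the one-sided idea the paper exploits. Since the damping term above is $\le 0$, Proposition 1 together with the regularity assumption gives the \emph{one-sided} bound $\frac{\mathrm{d}}{\mathrm{d}t}\|u_t\|_{L^2}^2\le K$ for all $t$, i.e.\ a bound on the rate of \emph{increase} only. So place your intervals \emph{before} the times $t_n$: if $\|u_t(t_n)\|_{L^2}^2\ge\eta^2$, then for $s\in[t_n-\tau,t_n]$ with $\tau=\eta^2/(2K)$ one gets $\|u_t(s)\|_{L^2}^2\ge\eta^2-K(t_n-s)\ge\eta^2/2$, and the rest of your argument (pass to a subsequence so the intervals are disjoint, apply integral positivity with $\epsilon=\tau$, sum to contradict summability) goes through verbatim. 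The paper implements the same idea slightly differently: it looks at intervals on which $\|u_t\|_{L^2}^2$ climbs between two fixed levels, uses the one-sided bound to show each such interval has length at least $\frac{L-l}{4K}$, and applies integral positivity there; this costs it a case analysis on $\limsup$ versus $\liminf$ that your corrected backward-interval version would avoid entirely.
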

\begin{proof}
    \textbf{Step 1}

    By Proposition in section 2, there exists $E_{\infty} \geq 0$ such that
    \begin{eqnarray}
        \lim_{t \rightarrow \infty}E_{u}(t) = E_{\infty}. \label{6}
    \end{eqnarray}
    By assumption 1, we know $\int_{\Omega}F(u)\mathrm{d}x \leq 0$, so that by (10) there exists $L \in [0, E_{\infty}]$ such that
    \begin{eqnarray}
        \limsup_{t \rightarrow \infty}\|u_{t}\|^{2}_{L^{2}} = L.  \label{7}
    \end{eqnarray}
    We want to show that $L = 0$, so let us assume by contradiction that $L > 0$.

    First, It is easy for us to get a important formula:
    \begin{eqnarray}
        0 < E_{\infty} &=& E(0) + \int_{0}^{\infty}E'(\tau)\mathrm{d}\tau \nonumber \\
                       &=& E(0) - \int_{0}^{\infty}h(\tau)\| u'(\tau) \|^{2}_{L^{2}}\mathrm{d}\tau.  \label{8}
    \end{eqnarray}
    Next, we must distinguish two cases.

    \emph{First case}: $\|u_{t}\|^{2}_{L^{2}} \equiv L$, $\forall t > 0$.

    Then by (\ref{8}) we get
    \begin{eqnarray}
       0 < E_{\infty} = E(0) - L \int_{0}^{\infty}h(\tau)\mathrm{d}\tau.  \label{9}
    \end{eqnarray}
    Since $h$ is integrally positive, there exists $\delta > 0$ such that
    \begin{eqnarray}
       \int_{n}^{n+1}h(s)\mathrm{d}s \geq \delta, \forall n \in \mathbb{N} .  \label{10}
    \end{eqnarray}
    Thus, (\ref{9}) and (\ref{10}) imply
    \[
       0 < E(0) - L \sum_{n=1}^{\infty} \delta = -\infty,
    \]
    which is impossible obviously.

    \emph{Second case}: $\|u_{t}\|^{2}_{L^{2}} \neq L$

    Then let us set
    \begin{eqnarray}
        \liminf_{t \rightarrow \infty}\|u_{t}\|^{2}_{L^{2}} = l \in [0, L). \label{11}
    \end{eqnarray}
    Since $u \in C^{1}([0, T]; L^{2})$ by Proposition 1 in section 2, (\ref{7}) and (\ref{11}) implies that there exist two sequences $(t_{n})_{n}$ and $(\overline{t}_{n})_{n}$ such that
    \begin{itemize}
        \item $t_{n} \rightarrow +\infty$, as $ n \rightarrow \infty$;
        \item $0< t_{n} < \overline{t}_{n} < t_{n}$, $\forall n \in \mathbb{N}$;
        \item $\frac{L+l}{2} = \|u'(t_{n})\|_{L^{2}} < \|u'(\overline{t}_{n})\|_{L^{2}} = \frac{3L+l}{4}$, $\forall n \in \mathbb{N}$;
        \item $\frac{L+l}{2} \leq\|u'(t)\|_{L^{2}} \leq \frac{3L+l}{4}$, $\forall t \in (t_{n}, \overline{t}_{n})$.
    \end{itemize}

    By proposition 1, there exists $K>0$ such that
    \begin{eqnarray}
        \frac{d}{dt}\|u'(t)\|^{2}_{L^{2}} &=& 2\langle u'(t), u''(t) \rangle \nonumber  \\
                                          &=& 2\langle u'(t), \triangle u - h(t)u'(t) + f(u)\rangle \nonumber \\
                                          &\leq & 2\|u'(t)\|_{L^{2}} \|\triangle u\|_{L^{2}} + 2\|u'(t)\|_{L^{2}}\|f(u)\|_{L^{2}} \nonumber \\
                                          &\leq &  K.   \label{12}
    \end{eqnarray}
    By integrating (\ref{12}) over $(t_{n}, \overline{t}_{n})$ we get
    \begin{eqnarray*}
        K(\overline{t}_{n} - t_{n}) &\geq& \int_{t_{n}}^{\overline{t}_{n}}\frac{d}{dt}\|u'(t)\|^{2}_{L^{2}}\mathrm{d}t  \\
                                     &=& \|u'(\overline{t}_{n})\|_{L^{2}} - \|u'(t_{n})\|_{L^{2}}  \\
                                     &=&  \frac{3L+l}{4} - \frac{L+l}{2} \\
                                     &=& \frac{L-l}{4}.
    \end{eqnarray*}
    So
    \begin{eqnarray}
        \overline{t}_{n} - t_{n} \geq \frac{L-l}{4K}, \forall n \in \mathbb{N}.  \label{13}
    \end{eqnarray}

    By (\ref{8}) and (\ref{13}) we get
    \begin{eqnarray*}
       0 < E_{\infty} \leq E(0) -  \int_{\cup(t_{n}, t_{n} + \frac{L-l}{4K})}h(\tau)\| u'(\tau) \|^{2}_{L^{2}}\mathrm{d}\tau.
    \end{eqnarray*}
    Since $\frac{L+l}{2} \leq\|u'(t)\|_{L^{2}} $, $\forall t \in (t_{n}, t_{n} + \frac{L-l}{4K})$ and $h$ is integrally positive, i.e. there exists $\delta > 0$ such that
    \begin{eqnarray}
       \int_{(t_{n}, t_{n} + \frac{L-l}{4K})}h(s)\mathrm{d}s \geq \delta, \forall n \in \mathbb{N},  \label{14}
    \end{eqnarray}
    so that
    \[
       0 < E(0) - \frac{L+l}{2} \sum_{n=1}^{\infty} \delta = -\infty,
    \]
    and a contradiction arises. Furthermore, we obtain
    \begin{eqnarray}
        \liminf_{t \rightarrow \infty}\|u_{t}\|^{2}_{L^{2}} = L.  \label{15}
    \end{eqnarray}

    (\ref{7}) and (\ref{15}) imply
    \begin{eqnarray}
        \lim_{t \rightarrow \infty}\|u_{t}\|^{2}_{L^{2}} = L.  \label{16}
    \end{eqnarray}

    \textbf{Step 2}

    In this step we will proof $L=0$.

    By (\ref{16}), there exists $T > 0$ such that
    \begin{eqnarray}
        \|u_{t}\|^{2}_{L^{2}} \geq \frac{L}{2}, \forall t \geq T.  \label{17}
    \end{eqnarray}
    By (\ref{8}) and (\ref{17}), we get
    \begin{eqnarray}
       0 < E_{\infty} \leq E(0) - \frac{L}{2} \int_{T}^{\infty}h(\tau)\mathrm{d}\tau. \label{18}
    \end{eqnarray}
    Since $h$ is integrally positive, there exists $\delta > 0$ such that
    \begin{eqnarray*}
       \int_{n}^{n+1}h(s)\mathrm{d}s \geq \delta, \forall n \in \mathbb{N} .
    \end{eqnarray*}
    Thus
    \[
       0 < E(0) - \frac{L}{2} \sum_{n=1}^{\infty} \delta = -\infty,
    \]
    again a contradiction.

    Above all, we can conclude that $L = 0$.

\end{proof}

\begin{remark}
    In proving the previous result actually we can also know
    \[
        \lim_{t \rightarrow \infty}e_{u}(t) = E_{\infty}.
    \]
\end{remark}

For the proof of the main theorem, we have to use the following generalization of the {\L}ojasiewicz-Simon inequality established in \cite{4}, see also \cite{5} for a previous variant.

\begin{lemma}
    Under Assumption 2, and let $\varphi \in \sum$, then there exist $\theta \in (0, \frac{1}{2}]$ and $\delta > 0$ such that
    $\forall u\in H^{1}_{0}(\Omega)$, $ \|u - \varphi\|_{W^{2, p}} < \delta$,
        \begin{eqnarray}
            \|\Delta u + f(u)\|_{L^{2}} \geq \mid e_{u} - e_{\varphi}\mid^{1-\theta}  \label{19}
        \end{eqnarray}
\end{lemma}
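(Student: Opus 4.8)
The plan is to establish the Łojasiewicz–Simon inequality for the energy functional $e_u$ by viewing it as a perturbation of its second-order Taylor expansion around the critical point $\varphi$, and then invoking the abstract Łojasiewicz–Simon result of the cited reference. The functional in play is
$$
e_u = \int_\Omega \tfrac12|\nabla u|^2 - F(u)\,\mathrm{d}x,
$$
whose Fréchet derivative (as a map from $H^1_0(\Omega)$ into its dual, or via the $L^2$ gradient) is precisely $-\Delta u - f(u)$, so that the critical points are exactly the elements of $\Sigma$ and the left-hand side $\|\Delta u + f(u)\|_{L^2}$ is the norm of the gradient $\nabla e_u$. Thus the inequality to be proved is the standard gradient-type Łojasiewicz estimate $\|\nabla e_u\|_{L^2}\ge |e_u - e_\varphi|^{1-\theta}$ in a $W^{2,p}$-neighborhood of $\varphi$.

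First I would verify the analyticity and regularity hypotheses needed to apply the abstract theorem from \cite{4}. The key structural fact is that, by Assumption 2, $f$ is analytic with $f,f',f''$ bounded on bounded sets; this makes the Nemytskii (superposition) operator $u\mapsto f(u)$ analytic as a map between the appropriate function spaces, and hence $e_u$ is an analytic functional near $\varphi$. I would identify the Banach space framework matching \cite{4}: take the "large" space $V = H^1_0(\Omega)$ with the $L^2$-gradient structure, and use the stronger $W^{2,p}$-topology (with the stated restrictions $p>N/2$ for $N\le 6$ and $p>N$ for $N>6$) to control the nonlinearity through Sobolev embeddings, so that $W^{2,p}\hookrightarrow L^\infty$ and the bounds on $f',f''$ give the required Lipschitz and analyticity estimates for the gradient map and its linearization.

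Next I would examine the linearization $A := -\Delta - f'(\varphi)$, which is the Hessian of $e_u$ at $\varphi$. This is a self-adjoint operator with compact resolvent on $L^2(\Omega)$, hence has discrete spectrum and a finite-dimensional kernel; its range is closed and of finite codimension, so $A$ is Fredholm of index zero. These are exactly the hypotheses (analyticity of the functional plus the Fredholm property of the linearization at the critical point) under which the abstract Łojasiewicz–Simon inequality of \cite{4} yields the existence of $\theta\in(0,\tfrac12]$ and $\delta>0$ with the stated estimate. I would therefore reduce the claim to checking that $A$ is Fredholm and that $e_u$ fits the abstract functional-analytic template, then quote the theorem to conclude.

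The main obstacle, **and the step requiring genuine care, is the regularity/topology bookkeeping** that makes the abstract machinery applicable: one must choose function spaces so that the gradient $u\mapsto -\Delta u - f(u)$ is an analytic map, its range lands in $L^2$, and the linearization is Fredholm, all while the neighborhood is measured in $W^{2,p}$ rather than in the energy norm. The delicate point is that the Łojasiewicz–Simon inequality compares the $L^2$-norm of the gradient with a power of the energy gap, but the neighborhood is taken in the finer $W^{2,p}$-topology; reconciling these requires the elliptic regularity estimate showing that near $\varphi$ the $W^{2,p}$-control of $u-\varphi$ follows from $L^2$-control of $\Delta u + f(u)$, which is precisely where the dimension-dependent conditions on $p$ enter. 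Once these embeddings and the Fredholm property are in place, the inequality follows directly from \cite{4}, so I would present the verification of hypotheses as the substantive content and the final inequality as an immediate consequence.
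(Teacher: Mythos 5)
The paper does not actually prove this lemma at all: it is quoted as a known result from Simon \cite{4} (see also Jendoubi \cite{5}), and your proposal correctly reconstructs the standard proof given in those references --- analyticity of the Nemytskii operator $u\mapsto f(u)$ (using Assumption 2 together with $W^{2,p}\hookrightarrow L^{\infty}$), the Fredholm, index-zero property of the self-adjoint linearization $-\Delta-f'(\varphi)$, and then the abstract gradient inequality --- so it takes essentially the same route the paper implicitly relies on. One small correction to your closing discussion: measuring the neighborhood in the finer $W^{2,p}$ topology makes the statement \emph{weaker} (fewer $u$ to check), not harder; its role is simply to furnish $L^{\infty}$ control of $u$, hence of $f(u)$ and $f'(u)$, without any growth restriction on $f$, so no elliptic-regularity ``reconciliation'' between the $L^{2}$ gradient norm and the $W^{2,p}$ ball is needed inside this lemma.
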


To estimate the rate of decay of the difference between a solution and equilibrium, we have to use the following Lemma from \cite{3}.

\begin{lemma}
    Let $T > 0$, $\upsilon \in W^{1, 1}$, $\upsilon \geq 0$ in $[0, T]$ such that:
    \begin{eqnarray*}
        \upsilon'(t) \leq -C[\upsilon(t)]^{\alpha},  a.e. in [0, T],
    \end{eqnarray*}
    where $C$ and $\alpha$ are two constants. Then
    \begin{itemize}
        \item if $1 < \alpha $, then we have, with $\beta = \frac{1}{\alpha - 1}$ and $C' = [C(\alpha - 1)]^{-\beta}$, the inequality
                \begin{eqnarray}
                    \upsilon(t) \leq C't^{-\beta}, t \in [0, T];  \label{20}
                \end{eqnarray}
        \item if $\alpha = 1$, then
                \begin{eqnarray}
                    \upsilon(t) \leq \upsilon(0)e^{-Ct}, t \in [0, T].  \label{21}
                \end{eqnarray}
    \end{itemize}
\end{lemma}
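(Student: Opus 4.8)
The plan is to treat the two cases separately, in each reducing the differential inequality to something integrable directly. Throughout I would first record two structural facts. Since $\upsilon \in W^{1,1}$ it is absolutely continuous on $[0,T]$, hence continuous, and since $\upsilon'(t) \leq -C[\upsilon(t)]^{\alpha} \leq 0$ for almost every $t$, the function $\upsilon$ is non-increasing. Consequently the set on which $\upsilon > 0$ is an interval of the form $[0, t^{*})$ (possibly all of $[0,T]$, and possibly empty if $\upsilon(0)=0$), while for $t \geq t^{*}$ one has $\upsilon(t) = 0$, so both asserted bounds hold trivially there because their right-hand sides are nonnegative. It therefore suffices to prove the estimates on the subinterval where $\upsilon > 0$.

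For the case $\alpha = 1$, I would use the integrating factor $e^{Ct}$. Since $\frac{d}{dt}(e^{Ct}\upsilon(t)) = e^{Ct}(\upsilon'(t) + C\upsilon(t)) \leq 0$ almost everywhere, the absolutely continuous map $t \mapsto e^{Ct}\upsilon(t)$ is non-increasing, whence $e^{Ct}\upsilon(t) \leq \upsilon(0)$ and (\ref{21}) follows at once.

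For the case $\alpha > 1$, on the interval where $\upsilon > 0$ I would introduce the substitution $w(t) = \upsilon(t)^{1-\alpha}$. Because $\upsilon$ is absolutely continuous and bounded away from zero on each compact subinterval of $\{\upsilon > 0\}$, the function $w$ is absolutely continuous there with $w'(t) = (1-\alpha)\upsilon(t)^{-\alpha}\upsilon'(t)$ almost everywhere. Inserting $\upsilon'(t) \leq -C\upsilon(t)^{\alpha}$ gives $w'(t) = (\alpha-1)\upsilon(t)^{-\alpha}(-\upsilon'(t)) \geq C(\alpha-1)$ almost everywhere. Integrating from $0$ to $t$ yields $w(t) \geq w(0) + C(\alpha-1)t \geq C(\alpha-1)t$, that is $\upsilon(t)^{1-\alpha} \geq C(\alpha-1)t$. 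Rearranging, and using $1-\alpha < 0$, gives $\upsilon(t) \leq [C(\alpha-1)t]^{-1/(\alpha-1)} = [C(\alpha-1)]^{-\beta} t^{-\beta} = C't^{-\beta}$, which is precisely (\ref{20}).

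The only genuine subtlety, and the step I would be most careful about, is the regularity bookkeeping near points where $\upsilon$ approaches zero in the case $\alpha > 1$: one must justify that the substitution $w = \upsilon^{1-\alpha}$ and the termwise integration of $w'$ are legitimate, since $\upsilon^{-\alpha}$ blows up as $\upsilon \to 0$. I would handle this by working on a compact subinterval $[0, t^{*}-\eta] \subset \{\upsilon > 0\}$ on which $\upsilon \geq c > 0$, so that $w$ is the composition of an absolutely continuous function with a map that is Lipschitz on $[c,\infty)$ and the chain rule applies; letting $\eta \to 0$ then extends the estimate up to $t^{*}$, and beyond $t^{*}$ the bound is trivial as noted above.
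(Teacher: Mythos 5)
Your proposal is correct and complete. Note that the paper itself does not prove this lemma at all: it is quoted verbatim from reference \cite{3} (Haraux--Jendoubi), so there is no in-paper argument to compare against. Your proof supplies exactly the standard argument behind that citation --- the integrating factor $e^{Ct}$ for $\alpha = 1$ and the substitution $w = \upsilon^{1-\alpha}$ turning the differential inequality into $w' \geq C(\alpha - 1)$ for $\alpha > 1$ --- and your careful treatment of the degenerate set $\{\upsilon = 0\}$ (restricting to compact subintervals where $\upsilon \geq c > 0$ so that the chain rule for absolutely continuous functions applies, then passing to the limit) closes the one genuine regularity gap that a naive version of this computation would leave open.
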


Now let us begin our proof of theorem 1:

\begin{proof}[Proof of Theorem 1]

\textbf{Step 1}

Let us also define the $\omega-limit$ set of $(u_{0}, u_{1}) \in \mathcal{H}$ by
    \begin{eqnarray}
    \omega(u_{0}, u_{1})&=& \{ (\varphi, 0)| \varphi \in H^{1}_{0}(\Omega)\cap W^{2, p}(\Omega), \nonumber  \\
    && \exists (t_{n})_{n\geq1}:  t_{n}\rightarrow \infty,  s. t. \lim_{n \rightarrow \infty}\|u(t_{n}, x) - \varphi(x) \|_{W^{2, p}} = 0\} \label{22}
    \end{eqnarray}
where $u$ is the global solution of (1). Then we have
\begin{itemize}
    \item $\omega(u_{0}, u_{1})$ is nonempty, compact and connected set;
    \item $\forall(\varphi, 0)\in \omega(u_{0}, u_{1})$ we have $ -\bigtriangleup \varphi = f(\varphi)$, i.e. $\omega(u_{0}, u_{1}) \in \Sigma$;
    \item $e_{u}(t)$ is constant over $\omega(u_{0}, u_{1})$.
\end{itemize}
We note that $\omega-limit$ set is a subset of the set of stationary solutions.

\textbf{Step 2}

Without loss of generality, we assume $h(t) > \kappa$ for all $t \in \mathbb{R}$. Now let $\varepsilon < \kappa $ be a positive real number, and we define the Lyapounov functional
\begin{eqnarray}
    H(t) &=& E_{u}(t) - \varepsilon^{2} \int_{\Omega}( \triangle u +f(u)) u_{t}\mathrm{d}x + \varepsilon \int_{\Omega}|\nabla u_{t}|^{2}\mathrm{d}x \nonumber \\
        && + \varepsilon \int_{\Omega}| \triangle u +f(u)|^{2}\mathrm{d}x - \varepsilon \int_{\Omega} f'(u) |u_{t}|^{2}\mathrm{d}x,  \label{23}
\end{eqnarray}
for all $t > 0$. We note that $H$ makes sense as a consequence of our assumption on the trajectory.

\emph{2.1} Estimation on $H'(t)$

We can easily have:
\begin{eqnarray*}
    H'(t) &=& -h(t)\int_{\Omega}| u_{t}|^{2}\mathrm{d}x - \varepsilon^{2} \int_{\Omega}( \triangle u +f(u)) (-h(t)u_{t}+\triangle u
             \\
         && +f(u))\mathrm{d}x - \varepsilon^{2} \int_{\Omega}( \triangle u_{t} +f'(u)u_{t}) u_{t}\mathrm{d}x - 2\varepsilon \int_{\Omega}\triangle u_{t} u_{tt}\mathrm{d}x
             \\
         && + 2\varepsilon \int_{\Omega}( u_{tt} + h(t)u_{t})(\triangle u_{t} +f'(u)u_{t})\mathrm{d}x
             \\
         && - \varepsilon \int_{\Omega} f''(u)u_{t}|u_{t}|^{2}\mathrm{d}x - 2\varepsilon \int_{\Omega} f'(u) u_{t} u_{tt}\mathrm{d}x
             \\
         &=&  \int_{\Omega}[-h(t) + (2 \varepsilon h(t)-\varepsilon^{2} )f'(u)]| u_{t}|^{2}\mathrm{d}x   \\
         && - \varepsilon \int_{\Omega} f''(u)u_{t}|u_{t}|^{2}\mathrm{d}x - \varepsilon^{2} \int_{\Omega}| \triangle u +f(u)|^{2}\mathrm{d}x    \\
         && + \varepsilon^{2} h(t) \int_{\Omega}( \triangle u +f(u))u_{t}\mathrm{d}x -(2 \varepsilon h(t)-\varepsilon^{2})\int_{\Omega}|\nabla u_{t}|^{2}\mathrm{d}x
\end{eqnarray*}
Using the $\varepsilon$-Young inequality in the term $ \int_{\Omega}( \triangle u +f(u))u_{t}\mathrm{d}x $, we find
\begin{eqnarray*}
    H'(t) &\leq& \int_{\Omega}[-\frac{h(t)}{2} + (2 \varepsilon h(t)-\varepsilon^{2} )f'(u)]| u_{t}|^{2}\mathrm{d}x \mathrm{d}x     \\
             &&  - \varepsilon \int_{\Omega}f''(u)u_{t}|u_{t}|^{2} - (\varepsilon^{2} - \frac{\varepsilon^{4}}{2} ) \int_{\Omega}| \triangle u
                 +f(u)|^{2}\mathrm{d}x  \\
             &&  -(2 \varepsilon h(t)-\varepsilon^{2})\int_{\Omega}|\nabla u_{t}|^{2}\mathrm{d}x \\
          &\leq& \int_{\Omega}[(-\frac{1}{2} + 2 \varepsilon f'(u))h(t)-\varepsilon^{2} f'(u)]| u_{t}|^{2}\mathrm{d}x \mathrm{d}x     \\
             &&  - \varepsilon \int_{\Omega}f''(u)u_{t}|u_{t}|^{2} - \frac{\varepsilon^{2}}{2} \int_{\Omega}| \triangle u
                 +f(u)|^{2}\mathrm{d}x  \\
             &&  -\varepsilon^{2}\int_{\Omega}|\nabla u_{t}|^{2}\mathrm{d}x
\end{eqnarray*}
The caculation is formal, but can be rigorously justified by using our assumption on the trajectory. Moreover, we note that $u \in W^{2, p}(\Omega) \hookrightarrow L^{\infty}(\Omega)$. Then by the regularity assumption on $f$ we know $f'(u)$ and $f''(u)$ remain bounded.

When $N \leq 6$, we have $ H_{0}^{1}(\Omega) \hookrightarrow L^{3}(\Omega)$ with continuous embedding and then we obtain
\begin{eqnarray*}
    - \varepsilon \int_{\Omega}f''(u)u_{t}|u_{t}|^{2}\mathrm{d}x &-& \varepsilon^{2}\int_{\Omega}|\nabla u_{t}|^{2}\mathrm{d}x \\
    &\leq& \varepsilon C \|u_{t}\|^{3}_{L^{3}} -\varepsilon^{2}\int_{\Omega}|\nabla u_{t}|^{2}\mathrm{d}x \\
    &\leq& \varepsilon C \|u_{t}\|^{3}_{H_{0}^{1}} -\varepsilon^{2}\int_{\Omega}|\nabla u_{t}|^{2}\mathrm{d}x.
\end{eqnarray*}
By lemma 1, we know $\lim\|u_{t}\|_{L^{2}} = 0$ and then there exists $T_{1} > 0$ such that for all $t > T_{1}$
\begin{eqnarray*}
    - \varepsilon \int_{\Omega}f''(u)u_{t}|u_{t}|^{2}\mathrm{d}x &-& \varepsilon^{2}\int_{\Omega}|\nabla u_{t}|^{2}\mathrm{d}x
        \\
    &\leq& \varepsilon C \|u_{t}\|^{2}_{L^{2}} - \frac{1}{2}\varepsilon^{2}\int_{\Omega}|\nabla u_{t}|^{2}\mathrm{d}x.
\end{eqnarray*}

When $N > 6$, we have $ u_{t} \in W_{1}^{P}(\Omega) \hookrightarrow L^{\infty}(\Omega)$. So
\begin{eqnarray*}
    - \varepsilon \int_{\Omega}f''(u)u_{t}|u_{t}|^{2}\mathrm{d}x &-& \varepsilon^{2}\int_{\Omega}|\nabla u_{t}|^{2}\mathrm{d}x
        \\
    &\leq& \varepsilon C \|u_{t}\|^{2}_{L^{2}} -\varepsilon^{2}\int_{\Omega}|\nabla u_{t}|^{2}\mathrm{d}x.
\end{eqnarray*}

In both cases, by choosing $\varepsilon$ small enough there exists $C > 0$, which is independent on $t$, such that for all $t > T_{1}$
\begin{eqnarray}
    -H'(t) &\geq &  C \{ \|u_{t}\|^{2}_{H^{1}} +  \| \triangle u +f(u) \|^{2}_{L^{2}} + \|\nabla u_{t}\|^{2}_{L^{2}} \} \nonumber \\
           &\geq & \frac{C}{3}\{ \|u_{t}\|_{H^{1}} +  \| \triangle u +f(u) \|_{L^{2}} \} ^{2},  \label{24}
\end{eqnarray}
and $ H(t) \geq 0$.

Then $H$ is non-increasing  on $[T_{1}, +\infty)$,  and so that $H(t)$ has a limit at infinity.

Since $(\varphi, 0)\in \omega(u_{0}, u_{1})$, there exists $(t_{n})_{n\geq1}:  t_{n}\rightarrow \infty$, such that
\begin{eqnarray}
    u(t_{n}, x) \xrightarrow{W^{2, p}} \varphi(x),    n \rightarrow \infty.  \label{25}
\end{eqnarray}
Moreover, we also get
\begin{eqnarray}
    \lim_{n \rightarrow \infty}e_{u}(t_{n}) = e_{\varphi}.  \label{26}
\end{eqnarray}

\emph{2.2} Estimation on $[H(t)-e_{\varphi}]^{1-\theta}$

Let $\theta \in (0, \frac{1}{2}]$ be as in Lemma 2, then by using Holder's inequality we get
\begin{eqnarray}
  [H(t)-e_{\varphi}]^{1-\theta} &\leq& C \{ \|u_{t}\|^{2(1-\theta)}_{L^{2}} + |e_{u}(t)-e_{\varphi}|^{1-\theta}   \nonumber \\
                    &&  + \| \triangle u +f(u) \|^{2(1-\theta)}_{L^{2}}+\|\nabla u_{t}\|^{2(1-\theta)} \nonumber \\
                    &&  + \| \triangle u +f(u) \|^{(1-\theta)}_{L^{2}} \|u_{t}\|^{(1-\theta)}_{L^{2}}\}.  \label{27}
\end{eqnarray}
Thanks to Young's inequality we have
\begin{eqnarray}
  \| \triangle u +f(u) \|^{(1-\theta)}_{L^{2}} \|u_{t}\|^{(1-\theta)}_{L^{2}} &\leq& (1-\theta)\| \triangle u +f(u) \|_{L^{2}} + \theta \|u_{t}\|^{\frac{(1-\theta)}{\theta}}_{L^{2}}\nonumber \\
  & \leq & \| \triangle u +f(u) \|_{L^{2}} +  \|u_{t}\|^{\frac{(1-\theta)}{\theta}}_{L^{2}}. \nonumber
\end{eqnarray}
Then (\ref{27}) becomes
\begin{eqnarray}
  [H(t)-e_{\varphi}]^{1-\theta} &\leq& C \{ \|u_{t}\|^{2(1-\theta)}_{L^{2}} + |e_{u}(t)-e_{\varphi}|^{1-\theta} \nonumber  \\
                    && + \| \triangle u +f(u) \|^{2(1-\theta)}_{L^{2}}+\|\nabla u_{t}\|^{2(1-\theta)} \nonumber \\
                     && +  \| \triangle u +f(u) \|_{L^{2}} +  \|u_{t}\|^{\frac{(1-\theta)}{\theta}}_{L^{2}}\}.  \label{28}
\end{eqnarray}
 Note that $2(1-\theta)>1$ and $\frac{(1-\theta)}{\theta} > 1$,  and by (\ref{5}) we know there exists $T_{2} > T_{1}$ such that for all $t > T_{2}$
\begin{eqnarray}
  [H(t)-e_{\varphi}]^{1-\theta} &\leq& C \{ \|u_{t}\|_{H^{1}} + |e_{u}(t)-e_{\varphi}|^{1-\theta} \nonumber \\
                    &&     + \| \triangle u +f(u) \|_{L^{2}} \}.  \label{29}
\end{eqnarray}

\textbf{Step 3}

Since $H$ has a limits at infinity and By (\ref{25}), we have for all $\sigma > 0$, $\sigma \ll \delta$ there exists $N > 0$ such that $t_{N} \geq T_{2}$ and
\begin{eqnarray}
    \|u(t_{N}, x) - \varphi(x) \|_{W^{2, p}} < \frac{\sigma}{2} ,  \label{30}
\end{eqnarray}
\begin{eqnarray}
    \frac{C}{\theta}\{ [H(t_{N})-e_{\varphi}]^{\theta} - [H(t)-e_{\varphi}]^{\theta}\} < \frac{\sigma}{2},  \label{31}
\end{eqnarray}
\begin{eqnarray}
    H(t) \geq e_{\varphi},  \label{32}
\end{eqnarray}
for all $ t \geq t_{N}$.

Let
\begin{eqnarray}
    \overline{t} = Sup\{ t \geq t_{N}:  \|u(s, x) - \varphi(x) \|_{W^{2, p}} < \delta, \forall s \in [t_{N}, t]\}  \label{33}
\end{eqnarray}
Then by Lemma 2 and (\ref{29}) we have for all $t \in [t_{N}, \overline{t})$
\begin{eqnarray}
  [H(t)-e_{\varphi}]^{1-\theta} \leq 2C \{ \|u_{t}\|_{H^{1}} + \| \triangle u +f(u) \|_{L^{2}} \}.  \label{34}
\end{eqnarray}

Since we know
\begin{eqnarray}
    -\frac{\mathrm{d}}{\mathrm{d}t} [H(t)-e_{\varphi}]^{\theta} = -\theta[H(t)-e_{\varphi}]^{\theta - 1}H'(t) ,  \label{35}
\end{eqnarray}
by (\ref{24}) and (\ref{34}) we have
\begin{eqnarray}
    -\frac{\mathrm{d}}{\mathrm{d}t}[H(t)-e_{\varphi}]^{\theta} \geq \theta C \{ \|u_{t}\|_{H^{1}} + \| \triangle u +f(u) \|_{L^{2}} \}. \label{36}
\end{eqnarray}
By integrating (\ref{36}) over $[t_{N}, \overline{t})$ we get
\begin{eqnarray}
     \int_{t_{N}}^{\overline{t}}\|u_{t}\|_{H^{1}}\mathrm{d}t &\leq& \int_{t_{N}}^{\overline{t}}\{ \|u_{t}\|_{H^{1}} + \| \triangle u +f(u) \|_{L^{2}} \}\mathrm{d}t \nonumber \\
     &\leq& \frac{C}{\theta} \{[H(t_{N})-e_{\varphi}]^{\theta} - [H(t)-e_{\varphi}]^{\theta} \}  .   \label{37}
\end{eqnarray}

Assuming $\overline{t}<\infty$, we have
\begin{eqnarray*}
   \|u(\overline{t}, x) - \varphi(x) \|_{H^{1}}  &\leq&  \|u(t_{N}, x) - \varphi(x) \|_{H^{1}}  + \int_{t_{N}}^{\overline{t}}\|u_{t}\|_{H^{1}}\mathrm{d}t \\
   &\lesssim &  \|u(t_{N}, x) - \varphi(x) \|_{W^{2, p}}  + \int_{t_{N}}^{\overline{t}}\|u_{t}\|_{H^{1}}\mathrm{d}t.
\end{eqnarray*}
By (\ref{30}), (\ref{31}) and (\ref{37}), we have
\begin{eqnarray*}
    \|u(\overline{t}, x) - \varphi(x) \|_{H^{1}}  \leq  \sigma,
\end{eqnarray*}
which contradicts (\ref{33}). Therefor $\overline{t}=\infty$.

Then by (\ref{37}) we have
\begin{eqnarray*}
     \int_{t_{N}}^{\infty}\|u_{t}\|_{H^{1}}\mathrm{d}t \leq +\infty,
\end{eqnarray*}
which implies the convergence of $u$ in $H^{1}$. Since the assumption on trajectory, we have
\begin{eqnarray}
    \lim_{t \rightarrow \infty}\|u(t, x) - \varphi(x)\|_{W^{2, p}} = 0.  \label{38}
\end{eqnarray}

\textbf{Step 4}

By (\ref{34}), (\ref{35}) and (\ref{36}), we have
\begin{eqnarray}
    \frac{\mathrm{d}}{\mathrm{d}t} [H(t)-e_{\varphi}] + C [H(t)-e_{\varphi}]^{2(1-\theta)} \leq 0,  \label{39}
\end{eqnarray}
for all $t \geq T = t_{N}$.

We can then apply Lemma 3. We have to distinguish 2 cases:
\\
\emph{Case 1}: $0 < \theta <\frac{1}{2} \Rightarrow  1 < 2(1 - \theta) < 2$  \\
By Lemma 3, we have for all $t \geq T$
\begin{eqnarray*}
    H(t)-e_{\varphi} \leq Ct^{-\frac{1}{1-2\theta}}.
\end{eqnarray*}
By integrating (\ref{36}) over $(t, \infty)$, $t \geq T$ we get
\begin{eqnarray}
     \int_{t}^{\infty} \{ \|u_{t}\|_{H^{1}} + \| \triangle u +f(u) \|_{L^{2}} \} \mathrm{d}\tau &\leq& \frac{C}{\theta} \{[H(t_{N})-e_{\varphi}]^{\theta} \nonumber \\
     && - [H(t)-e_{\varphi}]^{\theta} \} \nonumber \\
     &\leq &  Ct^{-\frac{\theta}{1-2\theta}}.  \label{40}
\end{eqnarray}
Furthermore, we have
\begin{eqnarray}
     \int_{t}^{\infty} \|u_{t}\|_{H^{1}}\mathrm{d}\tau  \leq   Ct^{-\frac{\theta}{1-2\theta}}.  \label{41}
\end{eqnarray}
\begin{eqnarray}
     \int_{t}^{\infty} \|u_{tt}\|_{L^{2}}\mathrm{d}\tau &\lesssim& \int_{t}^{\infty} \|u_{t}\|_{L^{2}}\mathrm{d}\tau +\int_{t}^{\infty} \| \triangle u +f(u) \|_{L^{2}}\mathrm{d}\tau \nonumber \\
     &\leq &  Ct^{-\frac{\theta}{1-2\theta}}.  \label{42}
\end{eqnarray}
By (\ref{41}) and (\ref{42}) we get
\begin{eqnarray*}
    \|u - \varphi\|_{H^{1}} + \|u_{t}\|_{L^{2}} \leq   Ct^{-\frac{\theta}{1-2\theta}}.
\end{eqnarray*}
\\
\emph{Case2}: $ \theta = \frac{1}{2} \Rightarrow  2(1 - \theta) = 1$ \\
By Lemma 3, we have for all $t \geq T$
\begin{eqnarray*}
    H(t)-e_{\varphi} \leq C\textrm{exp}(-Ct).
\end{eqnarray*}
By integrating (\ref{36}) over $(t, \infty)$, $t \geq T$ we get
\begin{eqnarray*}
     \int_{t}^{\infty} \{ \|u_{t}\|_{H^{1}} + \| \triangle u +f(u) \|_{L^{2}} \} \mathrm{d}\tau
     \leq   C\textrm{exp}(-Ct).
\end{eqnarray*}
Furthermore, we have
\begin{eqnarray}
     \int_{t}^{\infty} \|u_{t}\|_{H^{1}}\mathrm{d}\tau  \leq   C\textrm{exp}(-Ct).  \label{43}
\end{eqnarray}
\begin{eqnarray}
     \int_{t}^{\infty} \|u_{tt}\|_{H^{1}}\mathrm{d}\tau \leq   C\textrm{exp}(-Ct),  \label{44}
\end{eqnarray}
for $t \geq T$. By (\ref{43}) and (\ref{44}), and let $C$ a little bigger, then we get
\begin{eqnarray*}
    \|u - \varphi\|_{H^{1}} + \|u_{t}\|_{L^{2}} \leq   C\textrm{exp}(-Ct), t > 0.
\end{eqnarray*}

The theorem is completely proved.

\end{proof}

Using the same method, we can get another convergence to equilibrium theorem.

\begin{main}
    Assume $f$ satisfies Assumption 1 and 2, $h$ is in the positive-negative case. 
    Let $(u, u_{t})^{T}$ be a solution of problem (1), and there exists $p \geq 2$ such that its trajectory is precompact in $W^{2, p}(\Omega) \times W^{1, p}(\Omega)$, with $p > \frac{N}{2}$ if $N \leq 6$ and $p > N$ if $ N > 6$. Then there exists a equilibrium $\psi$ in the set
    \begin{eqnarray}
        \Sigma = \{ \psi \in H^{2}(\Omega)\cap H^{1}_{0}(\Omega):  - \Delta \psi = f(\psi) \}, \label{46}
    \end{eqnarray}
    such that
    \begin{eqnarray*}
        \lim_{t \rightarrow \infty}{\|u_{t}\|_{L^{2}} + \|u - \psi\|_{W^{2, p}}} = 0.
    \end{eqnarray*}
    And there exists $\theta = \theta(\psi) \in (0, \frac{1}{2}]$ such that
    \begin{itemize}
        \item if $0 < \theta < \frac{1}{2}$, then
                \begin{eqnarray*}
                    \|u - \psi\|_{H^{1}} = o(t^{-\frac{\theta}{1-2\theta}}), t \rightarrow \infty
                \end{eqnarray*}
        \item if $\theta = \frac{1}{2}$, then
                \begin{eqnarray*}
                    \|u - \psi\|_{H^{1}} = o(e^{-\xi t}), t \rightarrow \infty
                \end{eqnarray*}
                with $\xi > 0$ .
    \end{itemize}
\end{main}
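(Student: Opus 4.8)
The plan is to run the four‑step architecture of the proof of Theorem 1 essentially unchanged, since the $\omega$‑limit analysis, the precompactness of the trajectory in $W^{2,p}\times W^{1,p}$, the {\L}ojasiewicz--Simon inequality (Lemma 2) and the ODE comparison (Lemma 3) are all available verbatim; the single genuinely new feature is that $h$ is no longer integrally positive and, in particular, admits no uniform lower bound $\kappa>0$. I would first re‑establish the analogue of Lemma 1, namely $\lim_{t\to\infty}\|u_t\|_{L^2}=0$, using the interval structure of Definition 2 in place of integral positivity. Since $m_n\le h\le M_n$ on each $J_n=(a_n,b_n)$ and the complement of $\bigcup_n J_n$ is the countable set of transition points $b_n$, one still has $h>0$ almost everywhere, so Proposition 2 gives that $E_u$ is non‑increasing and hence $E_u(t)\to E_\infty$ and $L:=\limsup\|u_t\|_{L^2}^2$ exists. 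To exclude $L>0$ I would reproduce the two‑case dichotomy of Lemma 1, but replace the uniform dissipation constant $\delta$ (which previously came from integral positivity) by the local contribution $m_n|J_n|$: the bound $\frac{d}{dt}\|u_t\|_{L^2}^2\le K$ of (12) again forces $u_t$ to remain above $\tfrac{L+l}{2}$ on time intervals of definite length, and on the portions of these intervals lying inside the $J_n$ the integral $\int h\,u_t^2$ is bounded below, yielding $E_\infty=-\infty$ and the contradiction.

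With $\|u_t\|_{L^2}\to 0$ in hand, Step 2 (construction of the $\omega$‑limit set) is identical to Theorem 1 and uses no information about the sign of $h$: precompactness of the trajectory gives that $\omega(u_0,u_1)$ is nonempty, compact, connected, contained in $\Sigma$, and that $e_u$ is constant on it, so that we may fix $\psi\in\Sigma$ and a sequence $t_n\to\infty$ along which $u(t_n)\to\psi$ in $W^{2,p}$ and $e_u(t_n)\to e_\psi$.

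The step I expect to be the main obstacle is the Lyapunov analysis (2.1--2.2). The derivative estimate (24) crucially used $h(t)>\kappa>\varepsilon$ twice: to turn the coefficient $-(2\varepsilon h-\varepsilon^2)$ of $\int|\nabla u_t|^2$ into a genuinely negative $-\varepsilon^2$, and to keep the $-\tfrac{h}{2}\|u_t\|_{L^2}^2$ term dissipative; on the stretches where $h$ is small, both absorptions fail and the indefinite terms cannot be controlled. I would resolve this by carrying out the estimate locally on each $J_n$, choosing $\varepsilon<m_n$ so that the bound $m_n\le h\le M_n$ lets $h$ play the role of the constant $\kappa$ there, and by exploiting $\|u_t\|_{L^2}\to 0$ together with $u\in W^{2,p}\hookrightarrow L^\infty$ to render $f'(u)$, $f''(u)$ harmless in the relevant norms for large $t$. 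The decisive observation is that, because $u=0$ on $\partial\Omega$ for all $t$ forces $u_t\in H_0^1(\Omega)$, Poincaré's inequality lets the $-\varepsilon^2\int|\nabla u_t|^2$ dissipation alone control the full $\|u_t\|_{H^1}$; this is exactly why the $h$‑independent part of $-H'(t)$ suffices and why no uniform positivity of $h$ is needed for the $H^1$ estimate. The delicate point, which I would flag explicitly, is the uniformity across intervals: one must either patch the locally adapted $\varepsilon=\varepsilon_n$ using the continuity of $H$ and the upper bounds $M_n$ to control the transitions at the $b_n$, or invoke an implicit $\inf_n m_n>0$, in order to produce a single inequality $-H'(t)\ge C\{\|u_t\|_{H^1}^2+\|\Delta u+f(u)\|_{L^2}^2\}$ valid for all large $t$.

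Once that inequality is secured, Steps 3 and 4 go through exactly as before: combining it with the {\L}ojasiewicz--Simon inequality gives $[H(t)-e_\psi]^{1-\theta}\le C\{\|u_t\|_{H^1}+\|\Delta u+f(u)\|_{L^2}\}$ as in (29), a maximality argument on $\overline t$ shows $\int_{t_N}^{\infty}\|u_t\|_{H^1}\,dt<\infty$, whence $u\to\psi$ in $H^1$ and, by precompactness, in $W^{2,p}$; and Lemma 3 applied to the resulting differential inequality (39) yields $\|u-\psi\|_{H^1}=o(t^{-\frac{\theta}{1-2\theta}})$ when $0<\theta<\tfrac12$ and $\|u-\psi\|_{H^1}=o(e^{-\xi t})$ when $\theta=\tfrac12$. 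I would emphasize that, in contrast to Theorem 1, the decay is stated only for $\|u-\psi\|_{H^1}$ and not for $\|u_t\|_{L^2}$: since $h$ is no longer uniformly positive, the $-\tfrac{h}{2}\|u_t\|_{L^2}^2$ term does not provide control of the velocity at the same rate on the weak‑damping stretches, so the $\|u_t\|_{L^2}$ contribution is legitimately dropped from the final estimate.
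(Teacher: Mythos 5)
There is a genuine gap, and it occurs one step earlier than the point you flagged. Your adaptation of Lemma 1 does not survive Definition 2: the constants $m_n$, $M_n$ there carry no uniformity in $n$, so replacing the uniform $\delta$ of integral positivity by the local contributions $m_n|J_n|$ produces lower bounds on the dissipated energy that are in general summable, and the contradiction ``$0<E(0)-\infty$'' never arises. Worse, the conclusion of your Step 1 is simply false in this generality. Take $f\equiv 0$ (which satisfies Assumptions 1 and 2), $h(t)=e^{-t}$ with $J_n=(n-1,n)$, $m_n=e^{-n}$, $M_n=e^{-n+1}$, $N\le 3$, $p=2$, and smooth compatible nonzero initial data (the higher-order energies are then nonincreasing as well, so the trajectory is precompact in $H^2\times H^1$; all hypotheses of Theorem 2 hold). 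Since $\|u_t\|_{L^2}^2\le 2E$, the energy identity gives $E'=-h\|u_t\|_{L^2}^2\ge -2hE$, so Gronwall yields $E(t)\ge E(0)\,e^{-2\int_0^\infty h\,\mathrm{d}s}\ge E(0)e^{-2}>0$; the only equilibrium is $\psi=0$, so $\|u_t\|_{L^2}+\|u\|_{W^{2,2}}\to 0$ would force $E(t)\to 0$, a contradiction. (One can also see directly that $\|u_t\|_{L^2}\not\to 0$: each Fourier mode solves $\ddot a_k+e^{-t}\dot a_k+\lambda_k a_k=0$, an integrable perturbation of an undamped oscillator, so its kinetic energy keeps oscillating.) Hence no rearrangement of the Lemma 1 argument can extract $\|u_t\|_{L^2}\to 0$ from Definition 2 alone.

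The obstacle you did identify in the Lyapunov step is the same degeneracy, and neither of your proposed escapes closes it. Interval-adapted choices $\varepsilon_n<m_n$ give $-H'(t)\ge C_n\{\|u_t\|_{H^1}^2+\|\Delta u+f(u)\|_{L^2}^2\}$ with $C_n\to 0$ whenever $m_n\to 0$, and the {\L}ojasiewicz argument requires a single $t$-independent constant in order to integrate $-\frac{\mathrm{d}}{\mathrm{d}t}[H-e_\psi]^{\theta}$ and bound the length of the trajectory, so ``patching by continuity of $H$'' cannot produce it. The alternative, $\inf_n m_n>0$, forces $h\ge\inf_n m_n>0$ almost everywhere, which makes $h$ integrally positive, so Theorem 2 would be a verbatim special case of Theorem 1 and would assert nothing new. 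For comparison, the paper offers no actual proof of this statement (it says only that the method of Theorem 1 applies), and its introduction announces a ``special assumption'' for this case that never appears in the printed statement; as stated, the theorem is refuted by the example above. So your instinct that an extra hypothesis is needed is correct, but a valid proof must formulate it explicitly (for instance, enough summability of the $m_n$ against the lengths $|J_n|$ to keep the total dissipation infinite and the Lyapunov constants nondegenerate) and rerun Step 1 under that hypothesis, rather than flag the issue and proceed as if Lemma 1 survived unchanged.
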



\begin{remark}
    Although our results derive the decay rates, we still don't know whether the rates are optimal.
\end{remark}

\section{Generalization and applications}

\subsection{Abstract damped wave equation}

Let $\Omega$ be a bounded open in $\mathbb{R}^{N}$, $N\geq1$ and let us
consider the following evolution problem:

\begin{eqnarray}
\left
    \{
        \begin{array}{ll}
            \ddot{u} + h(t)B\dot{u} + Au = f(u) &t > 0 \\
                                        u(0,x) = u_{0}(x)   &\\
                                  \dot{u}(0,x) = u_{1}(x)  &
        \end{array}
\right.   \label{47}
\end{eqnarray}
Here $H \triangleq L^{2}(\Omega)$ the usual Hilbert space with scalar product $\langle \cdot, \cdot\rangle_{H}$ and norm $\|\cdot \|_{H}$, $A: H \rightarrow H$ is a second order strongly elliptic operator on $H$ with dense domain, $D(A)\subset H$,
\begin{eqnarray*}
    D(A) = \{ \upsilon \in H: A\upsilon \in H, \upsilon \mid _{\textrm{$\partial\Omega$}} = 0   \},
\end{eqnarray*}
and $V = D(A^{\frac{1}{2}})$ with norm $\|\upsilon \|_{V} = \|A^{\frac{1}{2}}\upsilon \|_{H}$ is such that
\[
    V \hookrightarrow H \equiv H' \hookrightarrow V'
\]
with dense embeddings. Define $W= A^{-1}(H)$.

Let $B: H\rightarrow H$ be a bounded linear operator satisfying the coerciveness condition for all $\omega \in H$
\begin{eqnarray}
    a\|\omega\|_{H} \leq \langle B\omega, \omega \rangle_{H}.  \label{48}
\end{eqnarray}
for some $a>0$.

We assume the problem (\ref{47}) is variational, i.e. there exists a real-valued functional $\mathcal{F}$ such that $\mathcal{F}(0) = 0$ and $\mathcal{F}'(u)(\omega) = \langle f(u), \omega \rangle_{V', V}$ for any $u, \omega \in V$. Moreover we assume
\begin{eqnarray}
    \langle f(u), u \rangle_{V', V} \leq 0, \forall u \in V.  \label{49}
\end{eqnarray}
In addition, $f$ satisfies the assumption 2 in section 2.

Finally, we assume $h$ is integrally positive or in the positive-negative case.

For each solution $u$ of problem (\ref{47}), we define its energy by
\begin{eqnarray*}
    E_{u}(t) =  \frac{1}{2}( \|u_{t}\|^{2}_{H} + \|u(t)\|^{2}_{V} ) - \mathcal{F}(u(t)).
\end{eqnarray*}
In addition, we denote
\begin{eqnarray*}
    e_{u}(t)= \frac{1}{2}\|u(t)\|^{2}_{V} - \mathcal{F}(u(t)).
\end{eqnarray*}

Accordingly, we also have
\begin{proposition}
    For any solution $u$ of problem (\ref{47}) we have
        \begin{itemize}
            \item $(u, u_{t}) \in C([0, T]; W \times V)\cap C^{1}([0, T]; V \times H)$;
            \item $E_{u}'(t) = - \langle h(t)B\dot{u}, \dot{u} \rangle_{H}$, a.e. in $ [0, \infty)$, which is non-positive;
        \end{itemize}
\end{proposition}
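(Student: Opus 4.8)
The plan is to treat this as a standard well-posedness and energy-dissipation result for an abstract second-order non-autonomous evolution equation, mirroring the argument behind Proposition 1 in the concrete case. First I would recast (\ref{47}) as a first-order system for $U = (u, u_{t})^{T}$ in the energy space $X = V \times H$, writing $U' = \mathcal{A}_{0} U + \mathcal{R}(t, U)$, where $\mathcal{A}_{0} = \left(\begin{smallmatrix} 0 & I \\ -A & 0 \end{smallmatrix}\right)$ and $\mathcal{R}(t, U) = (0,\, -h(t)Bu_{t} + f(u))^{T}$. Because $A$ is strongly elliptic and we have the Gelfand triple $V \hookrightarrow H \hookrightarrow V'$, the operator $\mathcal{A}_{0}$ generates a $C_{0}$-semigroup on $X$ with domain $D(\mathcal{A}_{0}) = W \times V$ (since $W = A^{-1}(H) = D(A)$). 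The perturbation $\mathcal{R}$ splits into a bounded, time-measurable linear part $-h(t)Bu_{t}$ (here $B$ is bounded and $h$ is locally bounded, in both the integrally positive and the positive-negative cases) and the Nemytskii term $f(u)$, which is locally Lipschitz from $V$ into $H$ by the boundedness of $f$ and $f'$ on bounded sets in Assumption 2 together with the embedding of $V$.

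Second, I would establish local existence and uniqueness of a mild solution by the variation-of-constants formula and a contraction argument, and then upgrade to a global solution using an a priori energy bound. The variational sign condition (\ref{49}) yields $\mathcal{F}(u) \leq 0$ upon integrating $\mathcal{F}'(su)(u) = s^{-1}\langle f(su), su\rangle_{V', V} \leq 0$ along the ray $s \mapsto su$, so that $E_{u}(t) \geq \tfrac{1}{2}(\|u_{t}\|_{H}^{2} + \|u\|_{V}^{2})$; combined with the dissipation identity (established first at the approximate level) this controls the energy norm on every finite interval and rules out blow-up, giving a global solution. To obtain the asserted regularity I would take data in $D(\mathcal{A}_{0}) = W \times V$, so that semigroup regularity theory makes the mild solution a strong solution with $(u, u_{t}) \in C([0,T]; W \times V) \cap C^{1}([0,T]; V \times H)$.

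Third, with this regularity the energy identity follows by direct differentiation. Since $u_{tt} \in C([0,T]; H)$, $u \in C([0,T]; D(A))$ and $u_{t} \in C([0,T]; V)$, every term of $E_{u}$ is differentiable and
\begin{eqnarray*}
    E_{u}'(t) &=& \langle u_{t}, u_{tt}\rangle_{H} + \langle Au, u_{t}\rangle_{H} - \langle f(u), u_{t}\rangle_{V', V} \\
              &=& \langle u_{t},\, u_{tt} + Au - f(u)\rangle_{H},
\end{eqnarray*}
where the duality pairing reduces to the $H$-inner product because $f(u) \in H$. Substituting the equation $u_{tt} + Au - f(u) = -h(t)Bu_{t}$ gives $E_{u}'(t) = -\langle h(t)B\dot{u}, \dot{u}\rangle_{H}$, and the coerciveness condition (\ref{48}) together with $h \geq 0$ (which holds a.e. in both admissible cases) forces this to be non-positive.

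The main obstacle I expect lies in the two points that are glossed over in the autonomous, undamped references: (i) showing that $\mathcal{A}_{0}$ together with the time-dependent, possibly discontinuous factor $h(t)$ in the perturbation still produces a well-defined flow with the stated strong regularity, which requires non-autonomous perturbation theory rather than a single fixed semigroup; and (ii) closing the global existence argument, where one must first derive the dissipation inequality on a Galerkin or regularized approximation in order to secure the uniform energy bound before the exact identity of the third step becomes available.
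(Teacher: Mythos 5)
Your overall strategy --- first-order reformulation, semigroup generation by the wave operator on $V\times H$ with domain $W\times V$, a perturbation/fixed-point argument for local existence, the energy bound (via $\mathcal{F}\leq 0$ from the sign condition (\ref{49})) for globality, and then the dissipation identity by direct differentiation --- is the natural one, and it is essentially what the paper implicitly appeals to: the paper states this proposition with no proof whatsoever (its concrete counterpart, Proposition 1, is likewise ``omitted'' with a pointer to adapting \cite{18}). Your third step is correct and complete as far as it goes: with the stated regularity, pairing the equation with $u_{t}$ gives $E_{u}'(t)=-h(t)\langle B\dot{u},\dot{u}\rangle_{H}$, and (\ref{48}) together with $h\geq 0$ a.e. gives the sign. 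Your ray-integration argument for $\mathcal{F}(u)\leq 0$ is also correct.

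However, two steps of your existence argument would fail as written. First, the claim that $u\mapsto f(u)$ is locally Lipschitz from $V$ into $H$ ``by the boundedness of $f$ and $f'$ on bounded sets together with the embedding of $V$'' is not true in general: Assumption 2 only bounds $f'$ on bounded intervals of $\mathbb{R}$, and $V=D(A^{\frac{1}{2}})\subset H^{1}$ does not embed into $L^{\infty}$ once $N\geq 2$, so a $V$-bounded set is not pointwise bounded and no $L^{2}$-Lipschitz estimate follows; concretely, $f(s)=-se^{s^{2}}$ satisfies Assumptions 1 and 2 but does not even map $H^{1}$ into $L^{2}$ when $N\geq 3$. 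To close this step you must either run the contraction in the graph norm on $D(\mathcal{A}_{0})=W\times V$, exploiting $W=D(A)\hookrightarrow L^{\infty}$ (valid only for $N\leq 3$ --- this is precisely why the paper's concrete theorems require $p>\frac{N}{2}$, i.e. $W^{2,p}\hookrightarrow L^{\infty}$), or add a growth hypothesis on $f$; the paper's own hypotheses are insufficient here, but a proof cannot simply assert the Lipschitz property. Second, the $C^{1}([0,T];V\times H)$ regularity you extract from ``semigroup regularity theory'' needs the forcing $t\mapsto -h(t)B u_{t}$ to be at least continuous in $t$; in the positive-negative/on-off case $h$ is discontinuous, the equation then forces $u_{tt}=-h(t)Bu_{t}-Au+f(u)$ to be discontinuous in time, and only Lipschitz-in-time, a.e.-differentiable regularity survives --- which is in fact all the ``a.e.'' formulation of the energy identity requires. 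You flag both of these points as ``expected obstacles,'' but a complete proof has to resolve them rather than record them.
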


Under these hypotheses, we have the following result:
\begin{main}
    Let $(u, u_{t})$ be a solution of problem (\ref{47}), and assume its trajectory is precompact in $W \times V$. Then there exists a equilibrium $\psi$ in the set
    \begin{eqnarray}
        \Sigma = \{ \psi \in D(A):  A \psi = f(\psi) \},  \label{50}
    \end{eqnarray}
    such that
    \begin{eqnarray}
        \lim_{t \rightarrow \infty}{\|u_{t}\|_{H} + \|u - \psi\|_{W}} = 0.  \label{51}
    \end{eqnarray}
\end{main}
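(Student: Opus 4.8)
The plan is to transcribe the proofs of Theorems 1 and 2 into the abstract framework, reading $-\Delta$ as $A$, the gradient norm and $L^{2}$-norm as the $V$- and $H$-norms, and replacing the positivity of the scalar damping coefficient by the coerciveness (\ref{48}) of $B$. Indeed, by Proposition 3 the energy identity reads $E_{u}'(t) = -h(t)\langle Bu_{t}, u_{t}\rangle_{H}$, and (\ref{48}) gives a dissipation estimate of the form $E_{u}'(t) \leq -c\,h(t)\|u_{t}\|_{H}^{2}$, which is exactly the inequality used for problem (\ref{1}); moreover the sign condition (\ref{49}) integrates to $\mathcal{F}(u) \leq 0$, the abstract counterpart of $\int_{\Omega}F(u)\,dx \leq 0$. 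The three ingredients I need are then: (i) $\lim_{t\to\infty}\|u_{t}\|_{H} = 0$; (ii) the structure of the $\omega$-limit set in $W$; and (iii) an abstract {\L}ojasiewicz--Simon inequality for the functional $e_{u}$. Note that, unlike Theorems 1 and 2, the present statement claims only convergence and no rate, so Lemma 3 is not invoked.

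First I would prove (i), the analogue of Lemma 1. Since $E_{u}$ is nonincreasing and bounded below (using $\mathcal{F}\leq 0$ and the trajectory bound), it converges to some $E_{\infty}\geq 0$; setting $L=\limsup_{t\to\infty}\|u_{t}\|_{H}^{2}$ one shows $L=0$ by contradiction. The quantity $\frac{d}{dt}\|u_{t}\|_{H}^{2} = 2\langle u_{t},\, f(u)-Au-h(t)Bu_{t}\rangle_{H}$ is bounded on the trajectory (by precompactness in $W\times V$ and boundedness of $f$), so if $L>0$ the set of times where $\|u_{t}\|_{H}^{2}\geq (L+l)/2$ contains a union of intervals of lengths bounded below; integrating $E_{u}'\leq -c\,h(t)\|u_{t}\|_{H}^{2}$ over this union and using integral positivity (or, in the positive-negative case, the lower bounds $m_{n}$ of $h$ on the intervals $J_{n}$) forces $E_{\infty}=-\infty$, a contradiction. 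For (ii), precompactness in $W\times V$ makes $\omega(u_{0},u_{1})$ nonempty, compact and connected, and the same reasoning as in Step 1 of Theorem 1 (the gradient-like, LaSalle-type structure) gives $A\psi = f(\psi)$ for every $(\psi,0)\in\omega$, so $\omega\subset\Sigma$, while the convergence of $E_{u}$ together with $\|u_{t}\|_{H}\to 0$ shows $e_{u}$ is constant on $\omega$. For (iii) I would invoke the abstract {\L}ojasiewicz--Simon inequality of \cite{4} (see also \cite{5}): since $Au-f(u)$ is the $H$-gradient of $e_{u}$ and $f$ is analytic with bounded derivatives, there exist $\theta\in(0,\tfrac12]$ and $\delta>0$ with $\|f(u)-Au\|_{H}\geq |e_{u}-e_{\psi}|^{1-\theta}$ whenever $\|u-\psi\|_{W}<\delta$.

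Having these, I would fix $(\psi,0)\in\omega$ and introduce the Lyapunov functional
\begin{eqnarray*}
    H(t) = E_{u}(t) - \varepsilon^{2}\langle f(u)-Au,\, u_{t}\rangle_{H} + \varepsilon\|u_{t}\|_{V}^{2} + \varepsilon\|f(u)-Au\|_{H}^{2} - \varepsilon\langle f'(u)u_{t},\, u_{t}\rangle_{H},
\end{eqnarray*}
with $\varepsilon>0$ small, which is finite thanks to the trajectory bound in $W\times V$. Differentiating along (\ref{47}) and using (\ref{48}), the $\varepsilon$-Young inequality, the boundedness of $f',f''$ and the already established $\|u_{t}\|_{H}\to 0$, I expect, exactly as in (\ref{24}), the estimate $-H'(t)\geq C\{\|u_{t}\|_{V}^{2}+\|f(u)-Au\|_{H}^{2}\}\geq \tfrac{C}{2}\{\|u_{t}\|_{V}+\|f(u)-Au\|_{H}\}^{2}$ together with $H(t)\geq e_{\psi}$ for $t$ large. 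Feeding the {\L}ojasiewicz--Simon inequality into the bound for $[H(t)-e_{\psi}]^{1-\theta}$ then yields $-\frac{d}{dt}[H(t)-e_{\psi}]^{\theta}\geq C\{\|u_{t}\|_{V}+\|f(u)-Au\|_{H}\}$ on the maximal interval where $\|u-\psi\|_{W}<\delta$; the continuation argument of Step 3 of Theorem 1 shows this interval is all of $[t_{N},\infty)$, whence $\int_{t_{N}}^{\infty}\|u_{t}\|_{V}\,dt<\infty$. This makes $t\mapsto u(t)$ Cauchy in $V$, so $u\to\psi$ in $V$, and precompactness of the trajectory in $W$ upgrades this to $\|u-\psi\|_{W}\to 0$; combined with (i) this gives (\ref{51}).

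The main obstacle will be to justify rigorously the formal computation of $H'(t)$ and to control the cubic-type terms such as $\langle f''(u)u_{t}u_{t},\, u_{t}\rangle_{H}$ without the explicit Sobolev embeddings ($H_{0}^{1}\hookrightarrow L^{3}$, $W^{2,p}\hookrightarrow L^{\infty}$) available for problem (\ref{1}). Here one must rely entirely on the precompactness of the trajectory in $W\times V$—which furnishes uniform bounds and, in the relevant regime, the embedding $W\hookrightarrow L^{\infty}$—together with the boundedness of $f',f''$, in order to absorb these terms into the good dissipative quantities once $\|u_{t}\|_{H}$ is small. A secondary subtlety is the proof of (i) in the positive-negative case, where integral positivity is unavailable and one instead exploits the uniform lower bounds $m_{n}$ of $h$ on the intervals $J_{n}$.
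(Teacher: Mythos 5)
Your proposal is correct and follows exactly the route the paper takes: the paper's entire proof of this theorem is the remark that one repeats the argument of Theorems 1 and 2, replacing Lemma 1 by its abstract analogue ($\lim_{t\to\infty}\|u_{t}\|_{H}=0$, the paper's Lemma 4) and Lemma 2 by the abstract {\L}ojasiewicz--Simon inequality on $W$ (the paper's Lemma 5, proved in \cite{5}), which is precisely your ingredients (i) and (iii) feeding the same Lyapunov-functional and continuation argument. You in fact supply more detail than the paper does, including the honest caveat about justifying $H'(t)$ and the cubic terms in the abstract setting, which the paper passes over in silence.
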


\begin{remark}
    Let us observe that in the case $ H = L^{2}(\Omega)$, $A = -\triangle$, and $B = Id$. Then $W = H^{2}$,$V = H^{1}$, so that we can imply theorem 1 or 2 when $p=2$.
\end{remark}

The proof of Theorem 3 is the same as Theorem 1 and 2, but we have to use the following two Lemmas instead of Lemma 1 and 2.
\begin{lemma}
    Let $(u, u_{t})$ be a solution of problem (\ref{47}), we can obtain
        \begin{eqnarray}
            \lim_{t \rightarrow \infty}\|u_{t}\|_{H} = 0.  \label{52}
        \end{eqnarray}
\end{lemma}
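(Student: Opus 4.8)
The plan is to reproduce the argument of Lemma 1 in the present abstract framework; the only genuinely new ingredient is the coerciveness (\ref{48}) of $B$, which converts the abstract dissipation $\langle h(t)Bu_{t},u_{t}\rangle_{H}$ into quantitative control of $\|u_{t}\|_{H}^{2}$. First I would record two preliminary facts. From the sign condition (\ref{49}), writing $\mathcal{F}(u)=\int_{0}^{1}\langle f(su),u\rangle_{V',V}\,ds$ and using $\langle f(w),w\rangle_{V',V}\le 0$ with $w=su$, one gets $\mathcal{F}(u)\le 0$, so that $E_{u}(t)\ge\tfrac12\|u_{t}\|_{H}^{2}\ge 0$. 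Combined with the energy identity of the preceding Proposition this shows $E_{u}$ is non-increasing and bounded below, hence converges to some $E_{\infty}\ge 0$. Second, the coerciveness of $B$ gives $\langle Bu_{t},u_{t}\rangle_{H}\ge a\|u_{t}\|_{H}^{2}$, whence
\[
    E_{\infty}=E_{u}(0)-\int_{0}^{\infty}\langle h(\tau)Bu_{t},u_{t}\rangle_{H}\,d\tau
    \le E_{u}(0)-a\int_{0}^{\infty}h(\tau)\|u_{t}(\tau)\|_{H}^{2}\,d\tau .
\]

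Next I would run the contradiction scheme exactly as in Lemma 1. Set $L=\limsup_{t\to\infty}\|u_{t}\|_{H}^{2}$ and assume $L>0$. In the case $\|u_{t}\|_{H}^{2}\equiv L$, the displayed inequality together with the integral positivity of $h$ forces $E_{\infty}=-\infty$, which is absurd. In the complementary case I would use the continuity of $t\mapsto\|u_{t}\|_{H}$ to produce two sequences $t_{n}<\overline{t}_{n}\to\infty$ on which $\|u_{t}\|_{H}^{2}$ travels between two fixed levels, and bound the length of the excursions from below. For this I need the abstract analogue of (\ref{12}): differentiating and substituting the equation,
\[
    \frac{d}{dt}\|u_{t}\|_{H}^{2}
    =2\langle u_{t},-h(t)Bu_{t}-Au+f(u)\rangle_{H}
    \le 2\|u_{t}\|_{H}\bigl(\|Au\|_{H}+\|f(u)\|_{H}\bigr),
\]
where the dissipative term, being non-positive, has been discarded. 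The precompactness of the trajectory in $W\times V$ bounds $\|Au\|_{H}$, and the boundedness hypotheses of Assumption 2 bound $\|f(u)\|_{H}$, so that $\frac{d}{dt}\|u_{t}\|_{H}^{2}\le K$ uniformly in $t$. Integrating over $(t_{n},\overline{t}_{n})$ yields a uniform lower bound on $\overline{t}_{n}-t_{n}$, and a second appeal to integral positivity again produces $E_{\infty}=-\infty$. Hence $\liminf_{t\to\infty}\|u_{t}\|_{H}^{2}=\limsup_{t\to\infty}\|u_{t}\|_{H}^{2}=L$, so that $\|u_{t}\|_{H}^{2}\to L$.

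Finally, to upgrade $\|u_{t}\|_{H}^{2}\to L$ to $L=0$, I would note that $\|u_{t}\|_{H}^{2}\ge L/2$ for all large $t\ge T$, whence the displayed energy inequality gives
\[
    E_{\infty}\le E_{u}(0)-\frac{aL}{2}\int_{T}^{\infty}h(\tau)\,d\tau=-\infty
\]
by integral positivity, a contradiction unless $L=0$. The main obstacle, modest as it is, lies precisely in the derivative estimate above: in the abstract setting $Au$ and $f(u)$ a priori live in $V'$, so one must invoke the precompactness of the trajectory in $W\times V$ together with Assumption 2 to legitimately pair these against $u_{t}\in H$ and obtain the uniform constant $K$; every other step is a verbatim transcription of Lemma 1. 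For $h$ in the positive-negative case the same scheme applies, working on the intervals $J_{n}$ where $h$ is bounded below by the positive constant $m_{n}$, with the excursion-length estimate again furnishing the required contradiction.
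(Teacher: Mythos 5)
Your proposal is correct and matches the paper's intended argument: the paper omits the proof of Lemma 4 precisely because it is the adaptation of Lemma 1 that you carry out, with the coercivity of $B$ converting $\langle h(t)Bu_{t},u_{t}\rangle_{H}$ into control of $h(t)\|u_{t}\|_{H}^{2}$, the sign condition (\ref{49}) giving $\mathcal{F}(u)\le 0$ hence $E_{u}\ge 0$, and the precompactness of the trajectory in $W\times V$ supplying the uniform bound $K$ in the derivative estimate. The only detail worth flagging is that you read the coerciveness condition (\ref{48}) as $\langle B\omega,\omega\rangle_{H}\ge a\|\omega\|_{H}^{2}$, i.e.\ with the norm squared, which is the standard (and surely intended) form, whereas the paper as written has $a\|\omega\|_{H}$.
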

\begin{lemma}
    Let $\psi \in \sum$, then there exist $\theta \in (0, \frac{1}{2}]$ and $\delta > 0$ such that
    $\forall u\in W$, $ \|u - \psi\|_{W} < \delta$,
        \begin{eqnarray}
            \|-A u + f(u)\|_{H} \geq \mid e_{u} - e_{\psi}\mid^{1-\theta}.  \label{53}
        \end{eqnarray}
\end{lemma}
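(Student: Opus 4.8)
The plan is to recognize $e_u = E(u) := \tfrac12\|u\|_V^2 - \mathcal{F}(u)$ as a real-analytic functional whose $H$-gradient is $\nabla E(u) = Au - f(u)$, so that the left-hand side of (\ref{53}) is precisely $\|\nabla E(u)\|_H$ and, by the definition of $\Sigma$ in (\ref{50}), $\psi$ is a critical point, $\nabla E(\psi) = 0$. Inequality (\ref{53}) is then exactly the {\L}ojasiewicz--Simon gradient inequality for $E$ at $\psi$, and I would obtain it by verifying that $E$ satisfies the hypotheses of the abstract result of \cite{4}: namely (i) $E$ is analytic in a $W$-neighborhood of $\psi$, and (ii) the linearization $L := E''(\psi) = A - f'(\psi)\,\cdot$ is a Fredholm operator of index zero on $H$. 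Once these two structural facts are in hand, the Lyapunov--Schmidt reduction plus the finite-dimensional {\L}ojasiewicz inequality produce the exponent $\theta$ and the estimate automatically.

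First I would establish analyticity. The quadratic term $\tfrac12\|u\|_V^2 = \tfrac12\langle Au,u\rangle_H$ is entire, so everything reduces to $\mathcal{F}$. By Assumption 2, $f$ is analytic with $f, f', f''$ bounded on bounded sets; since $u \in W = D(A)$ embeds into $L^{\infty}(\Omega)$ in the relevant range, the superposition (Nemytskii) operator $u \mapsto f(u)$ is analytic from a $W$-neighborhood of $\psi$ into $H$, obtained by expanding $f$ in its power series about $\psi$ and using the uniform bounds to control the remainder. Hence $\mathcal{F}$, and therefore $E$, is analytic near $\psi$, with $E'(u)\omega = \langle Au - f(u), \omega\rangle_H$ and $E''(\psi)\omega = A\omega - f'(\psi)\omega$.

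Next I would verify the Fredholm property. By the strong ellipticity of $A$ the embedding $W = D(A) \hookrightarrow H$ is compact, so $A$ has compact resolvent; since $\psi \in W \hookrightarrow L^\infty$, the multiplication operator $f'(\psi)$ is bounded on $H$ and relatively compact, so $L = A - f'(\psi)$ is self-adjoint with compact resolvent and thus Fredholm of index zero. Let $N = \ker L$ be the (finite-dimensional) kernel and $\Pi$ the orthogonal projection of $H$ onto $N$. A Lyapunov--Schmidt reduction then applies: writing $u = \psi + v + w$ with $v \in N$ and $w \in N^{\perp}$, the analytic implicit function theorem solves $(\mathrm{Id}-\Pi)\nabla E(\psi + v + w) = 0$ for a unique small $w = w(v)$ depending analytically on $v$, reducing the problem to the real-analytic function $\Gamma(v) := E(\psi + v + w(v))$ on a small ball of the finite-dimensional space $N$.

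Finally I would invoke the classical finite-dimensional {\L}ojasiewicz inequality for $\Gamma$: there exist $\theta \in (0,\tfrac12]$ and $C, \delta' > 0$ with $|\Gamma(v) - \Gamma(0)|^{1-\theta} \le C\,|\nabla\Gamma(v)|$ for $|v| < \delta'$. Tracking the reduction, $\nabla\Gamma(v)$ is comparable to $\Pi\nabla E(u)$, while $\|(\mathrm{Id}-\Pi)\nabla E(u)\|_H$ dominates $\|w\|$ and controls the discrepancy between $E(u)-E(\psi)$ and $\Gamma(v)-\Gamma(0)$; combining these estimates and shrinking $\delta$ yields $\|\nabla E(u)\|_H \ge |E(u) - E(\psi)|^{1-\theta}$, which is (\ref{53}). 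I expect the main obstacle to be the rigorous verification of (i) and (ii)—specifically the analyticity of $u \mapsto f(u)$ as a map into $H$ and the self-adjoint Fredholm structure of $L$ within the abstract $V \hookrightarrow H \hookrightarrow V'$ framework—since once these are confirmed the reduction machinery of \cite{4} carries the argument through routinely.
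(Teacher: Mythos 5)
Your proposal is correct and coincides with the proof the paper relies on: the paper establishes Lemma 5 only by citing Jendoubi \cite{5}, whose argument is exactly your scheme---identify $\|-Au+f(u)\|_{H}$ as $\|\nabla E(u)\|_{H}$ with $\psi$ a critical point, verify analyticity of the Nemytskii map and the self-adjoint, index-zero Fredholm structure of $E''(\psi)=A-f'(\psi)$, and then run the analytic Lyapunov--Schmidt reduction combined with the finite-dimensional {\L}ojasiewicz inequality. The two caveats you flag as the main obstacles (the embedding $W=D(A)\hookrightarrow L^{\infty}(\Omega)$ and the self-adjointness of $A$) are precisely the hypotheses the paper leaves implicit in its abstract framework, so your identification of them is apt rather than a gap.
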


We refer to \cite{5} for the proof of Lemma 5.

\subsection{Nonlinear interior damping}

We are concerned some classes of nonlinear damped wave equations in a bounded open domain $\mathbb{R}^{N}$, $N\geq1$,
\begin{eqnarray}
\left
    \{
        \begin{array}{ll}
            \ddot{u} + h(t)g(\dot{u}) - \Delta(u) = f(u) &\textrm{in $\mathbb{R}^{+} \times \Omega$;} \\
                                             u = 0 &\textrm{on $\mathbb{R}^{+} \times\partial\Omega$;}\\
                                        u(0,x) = u_{0}(x) &\textrm{in $\Omega$;}\\
                                  \dot{u}(0,x) = u_{1}(x) &\textrm{in $\Omega$.}
        \end{array}
\right.  \label{54}
\end{eqnarray}
Here $f$ satisfies Assumption 1 and 2, and $g$ are nonlinear functions subject to the following assumption.
    \begin{itemize}
        \item[(g-1)] $g \in C^{1}(\mathbb{R})$, $g$ is monotone increasing, and such that $0 < m_{1} \leq g'(s) \leq m_{2} <\infty$ for all $s \in \mathbb{R}$.
        \item[(g-2)] $g(0) = 0$.
    \end{itemize}

First we have a brief look at the nonlinear function $g$. For $s \geq 0$ we have
\[
   m_{2}s \geq g(s) = \int_{0}^{s}g'(\tau)\mathrm{d}\tau = s \int_{0}^{1}g'(\tau s)\mathrm{d}\tau \geq m_{1}s.
\]
Similarly, for $s < 0$ we obtain $m_{2}s \leq g(s) \leq m_{1}s $. These two formulas combined result in $ m_{1}s^{2} \leq g(s)s \leq m_{2}s^{2}$.

As to the energy functional, we have for any solution $(u, u_{t})^{T}$ of Eqs.(\ref{54}) we have
    \[
        E_{u}'(t) = - \int_{\Omega}h(t)g(u_{t})u_{t}\mathrm{d}x,
    \]
    which is non-positive if $h \geq 0$.

In this case Theorem 1 can be easily generalized as follows.
\begin{main}
    Assume $h$ is integrally positive or in the positive-negative case and $f$ satisfies Assumption 1 and 2. Let $(u, u_{t})^{T}$ be a solution of problem (\ref{54}) with $(u_{0}, u_{1})^{T} \in \mathcal{D}$, and its trajectory is precompact in $H^{2}(\Omega) \times H^{1}(\Omega)$. Then there exists a equilibrium $(\varphi, 0)$, with $\varphi$ in the set
    \begin{eqnarray*}
        \Sigma = \{ \varphi \in H^{2}(\Omega)\cap H^{1}_{0}(\Omega):  - \Delta \varphi = f(\varphi) \},
    \end{eqnarray*}
    and $\theta = \theta(\varphi) \in (0, \frac{1}{2}]$ such that
    \begin{itemize}
        \item if $0 < \theta < \frac{1}{2}$, then
                \begin{eqnarray*}
                    \|u - \varphi\|_{H^{1}} + \|u_{t}\|_{L^{2}} = o(t^{-\frac{\theta}{1-2\theta}}), t \rightarrow \infty
                \end{eqnarray*}
        \item if $\theta = \frac{1}{2}$, then
                \begin{eqnarray*}
                    \|u - \varphi\|_{H^{1}} + \|u_{t}\|_{L^{2}} = o(e^{-\zeta t}), t \rightarrow \infty
                \end{eqnarray*}
                with $\zeta > 0$.
    \end{itemize}
\end{main}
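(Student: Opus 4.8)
The plan is to run the proof of Theorem~1 (when $h$ is integrally positive) or of Theorem~2 (when $h$ is in the positive-negative case) essentially unchanged, the only new feature being the nonlinear damping $g(\dot u)$ in place of $\dot u$. The structural fact that makes this transcription possible is the pair of pointwise bounds established just above the statement, namely $m_1 s^2 \le g(s)s \le m_2 s^2$ for all $s$, together with $m_1 \le g'(s) \le m_2$ from (g-1): these say that $g$ is, up to the fixed constants $m_1$ and $m_2$, comparable to the identity, so that every dissipation and coercivity estimate used for the linear damping survives with its constant merely rescaled by $m_1$ or $m_2$.

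First I would establish the analogue of Lemma~1, that is $\|u_t\|_{L^2}\to 0$. By the energy identity recorded above, $E_u'(t) = -\int_\Omega h(t)g(u_t)u_t\,\mathrm{d}x \le -m_1\int_\Omega h(t)u_t^2\,\mathrm{d}x$, so $E_u$ is nonincreasing and $\int_0^\infty h(t)\|u_t\|_{L^2}^2\,\mathrm{d}t<\infty$ exactly as in $(8)$. The two-case contradiction argument of Lemma~1 then carries over verbatim: the damping enters the differential inequality $(12)$ only through the term $-2h(t)\langle u_t,g(u_t)\rangle$, which is nonpositive by (g-1)--(g-2), so the bound $\frac{\mathrm{d}}{\mathrm{d}t}\|u_t\|_{L^2}^2\le K$ is untouched, and integral positivity of $h$ (or positivity on the good intervals $J_n$ in the positive-negative case) forces the limit $L$ to vanish. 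Lemma~2, the {\L}ojasiewicz--Simon inequality, needs no modification at all, since it refers only to the stationary operator $-\Delta\varphi-f(\varphi)$ and the reduced energy $e_u$, both identical to those of problem $(1)$.

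The substantive change occurs in the Lyapunov functional. I would keep the very same $H(t)$ as in $(23)$ and recompute $H'(t)$, now inserting $u_{tt}=\Delta u-h(t)g(u_t)+f(u)$. Two effects appear. The dissipation term $-h(t)\int_\Omega u_t^2$ becomes $-\int_\Omega h(t)g(u_t)u_t\le -m_1 h(t)\int_\Omega u_t^2$, which is still of definite negative sign. More delicately, the term that produced the crucial negative multiple of $\int_\Omega|\nabla u_t|^2$ arises from $2\varepsilon h(t)\int_\Omega\Delta u_t\,g(u_t)\,\mathrm{d}x$, and after one integration by parts this equals $-2\varepsilon h(t)\int_\Omega g'(u_t)|\nabla u_t|^2\,\mathrm{d}x\le -2\varepsilon m_1 h(t)\int_\Omega|\nabla u_t|^2\,\mathrm{d}x$; here $g'(u_t)$ cannot be taken out of the integral, and it is precisely the pointwise lower bound $g'(u_t)\ge m_1$ that keeps a coercive gradient contribution. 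All the remaining new cross terms, such as $\varepsilon^2 h(t)\int_\Omega(\Delta u+f(u))g(u_t)$ and $2\varepsilon h(t)\int_\Omega f'(u)u_t\,g(u_t)$, are controlled by $|g(u_t)|\le m_2|u_t|$ together with the $\varepsilon$-Young inequality and the boundedness of $f'$ and $f''$ (recall $u\in H^2(\Omega)\hookrightarrow L^\infty(\Omega)$, so $f'(u),f''(u)$ stay bounded). Choosing $\varepsilon$ small in terms of $m_1,m_2,\sup|f'|,\sup|f''|$ and of the lower bound $\kappa$ for $h$ then reproduces the analogue of $(24)$, namely $-H'(t)\ge C\{\|u_t\|_{H^1}^2+\|\Delta u+f(u)\|_{L^2}^2+\|\nabla u_t\|_{L^2}^2\}$. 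Once this holds, Steps~3 and~4 of the proof of Theorem~1 --- combining $(24)$, the {\L}ojasiewicz--Simon bound on $[H(t)-e_\varphi]^{1-\theta}$, and the ODE comparison Lemma~3 --- apply without further change and yield both the convergence $\|u-\varphi\|_{H^1}+\|u_t\|_{L^2}\to 0$ and the stated dichotomy of decay rates.

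The step I expect to be the main obstacle is exactly the coercivity inequality $(24)$: because $g'(u_t)$ sits inside $\int_\Omega g'(u_t)|\nabla u_t|^2$, one cannot reduce to the linear case by simply replacing $h(t)$ with a constant, and the argument hinges on the uniform ellipticity $m_1\le g'(u_t)$ surviving the integration by parts. Keeping careful track of how $m_1$ and $m_2$ propagate through all the terms --- so that a single $\varepsilon$, independent of $t$, makes $-H'(t)$ coercive --- is the only genuinely delicate point; everything else is a direct transcription of the linear argument.
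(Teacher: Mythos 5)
Your proposal is correct and is essentially the paper's own (implicit) argument: the paper gives no separate proof of this theorem, asserting only that the proof of Theorem 1 generalizes, and your transcription --- the comparison $m_1 s^2 \le g(s)s \le m_2 s^2$ for the dissipation terms, the unchanged {\L}ojasiewicz--Simon lemma, and the integration by parts $\int_\Omega \Delta u_t\, g(u_t)\,\mathrm{d}x = -\int_\Omega g'(u_t)|\nabla u_t|^2\,\mathrm{d}x \le -m_1\int_\Omega|\nabla u_t|^2\,\mathrm{d}x$ --- supplies exactly the adaptation the paper has in mind, including the one genuinely new point (coercivity of the gradient term surviving through $g'(u_t)\ge m_1$). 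Any remaining imprecision (e.g.\ how the Young-inequality step treats a possibly unbounded $h$) is inherited from the paper's proof of Theorem 1 itself, not introduced by your adaptation.
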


\subsection{Example(Integrally positive)}
We consider the Cauchy problem for the nonlinear wave equation with time dependent damping
\begin{eqnarray}
\left
    \{
        \begin{array}{ll}
            \ddot{u} + \frac{h_{0}}{(t+1)^{\alpha}}\dot{u} - \Delta(u) + u^{3} =0 & (t, x) \in \mathbb{R}^{+} \times \mathbb{R}^{N}, \\
                                        u(0,x) = u_{0}(x) & x \in \mathbb{R}^{N},\\
                                  \dot{u}(0,x) = u_{1}(x) & x \in \mathbb{R}^{N},
        \end{array}
\right.  \label{55}
\end{eqnarray}
where $\alpha > 1$, $h_{0} > 0$, $f$ satisfies Assumption 1, 2 and the initial data $(u_{0}(x), u_{1}(x)) \in H^{2}(\Omega) \times H^{1}$ are compactly supported. Obviously, $\frac{h_{0}}{(t+1)^{\beta}}$ is integrally positive, and Eqn.(\ref{55}) satisfies the conditions of Theorem 1 when $p=2$.


\subsection{Neumann boundary conditions}

Let $\Omega$ be a bounded, connected set in $\mathbb{R}^{N}, N \geq 1$ with smooth boundary $\partial\Omega$. The exterior normal on $\partial\Omega$ is denoted by $\nu$. We consider the following smilinear wave equation

\begin{eqnarray}
            \ddot{u} + h(t)\dot{u} - \Delta(u) = f(u) &\textrm{in $\mathbb{R}^{+} \times \Omega$;} \label{57} 
\end{eqnarray}
subject to the Neumann boundary condition
\begin{eqnarray}
    \frac{\partial u}{\partial \nu}(t, x) = 0  &\textrm{on $\mathbb{R}^{+} \times\partial\Omega$;}  \label{58}
\end{eqnarray}
and the initial condition
\begin{eqnarray}
    u(0,x) = u_{0}(x), \dot{u}(0,x) = u_{1}(x) &\textrm{in $\Omega$.}  \label{59}
\end{eqnarray}
\begin{main}
    Assume $h$ is integrally positive or in positive-negative case and $f$ satisfies Assumption 1 and 2. Let $(u, u_{t})^{T}$ be a solution of problem (\ref{57})--(\ref{59}) with $(u_{0}, u_{1})^{T} \in \mathcal{D}$, and its trajectory is precompact in $H^{2}(\Omega) \times H^{1}(\Omega)$. Then there exists a equilibrium $(\varphi, 0)$, with $\varphi$ in the set
    \begin{eqnarray*}
        \Sigma = \{ \varphi \in H^{2}(\Omega)\cap H^{1}(\Omega):  - \Delta \varphi = f(\varphi) \},
    \end{eqnarray*}
    and $\theta = \theta(\varphi) \in (0, \frac{1}{2}]$ such that
    \begin{itemize}
        \item if $0 < \theta < \frac{1}{2}$, then
                \begin{eqnarray*}
                    \|u - \varphi\|_{H^{1}} + \|u_{t}\|_{L^{2}} = o(t^{-\frac{\theta}{1-2\theta}}), t \rightarrow \infty
                \end{eqnarray*}
        \item if $\theta = \frac{1}{2}$, then
                \begin{eqnarray*}
                    \|u - \varphi\|_{H^{1}} + \|u_{t}\|_{L^{2}} = o(e^{-\zeta t}), t \rightarrow \infty
                \end{eqnarray*}
                with $\zeta > 0$.
    \end{itemize}
\end{main}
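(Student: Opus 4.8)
The plan is to follow the proof of Theorem 1 essentially line by line, checking that each ingredient survives the passage from Dirichlet to Neumann boundary data. The argument rests on three pillars: Lemma 1 (that $\|u_t\|_{L^2}\to 0$), the {\L}ojasiewicz--Simon inequality of Lemma 2, and the Lyapunov functional $H(t)$ of (\ref{23}) together with its dissipation bound (\ref{24}). I would re-derive each of these for the Neumann problem (\ref{57})--(\ref{59}) and then invoke the integration argument of Steps 3 and 4 of Theorem 1 verbatim. For $h$ integrally positive the first step is Lemma 1, while for the positive--negative case one substitutes the corresponding first step of Theorem 2.

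First I would re-establish the energy identity. Multiplying (\ref{57}) by $u_t$ and integrating over $\Omega$, the only term sensitive to the boundary condition is $-\int_\Omega \Delta u\, u_t\,dx = \int_\Omega \nabla u\cdot\nabla u_t\,dx - \int_{\partial\Omega}\frac{\partial u}{\partial\nu}u_t\,dS$, and the boundary integral vanishes by (\ref{58}). Hence $E_u'(t) = -\int_\Omega h(t)u_t^2\,dx$ exactly as in Proposition 2. Since the proof of Lemma 1 uses only this identity, the sign assumption (which yields $F(u)\le 0$), and the integral positivity of $h$, Lemma 1 holds without change. The precompactness hypothesis then guarantees, as in Theorem 1, that $\omega(u_0,u_1)$ is nonempty, compact and connected, is contained in the Neumann equilibrium set $\Sigma = \{\varphi\in H^2(\Omega)\cap H^1(\Omega): -\Delta\varphi = f(\varphi)\}$, and that $e_u$ is constant on it.

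Next I would form the same functional $H(t)$ as in (\ref{23}) and recompute $H'(t)$. Every integration by parts in that computation produces a boundary term carrying either $\frac{\partial u}{\partial\nu}$ or $\frac{\partial u_t}{\partial\nu}$; both vanish by (\ref{58}) (differentiating the boundary condition in $t$ kills the second), so the dissipation bound (\ref{24}) and the positivity $H(t)\ge 0$ are recovered after choosing $\varepsilon$ small and using $u\in H^2(\Omega)\hookrightarrow L^\infty(\Omega)$, exactly as in the $p=2$ instance of Theorem 1, to keep $f'(u)$ and $f''(u)$ bounded.

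The one genuinely new point, and the main obstacle, is the {\L}ojasiewicz--Simon inequality of Lemma 2 in the Neumann setting. The linearization of the gradient $-\Delta u - f(u)$ at an equilibrium $\varphi$ is the operator $-\Delta - f'(\varphi)$ equipped now with Neumann boundary conditions; it is self-adjoint with compact resolvent, hence Fredholm of index zero, and together with the analyticity of $f$ (Assumption 2) this is precisely the input required by the abstract {\L}ojasiewicz--Simon inequality of \cite{4}. The delicate feature absent in the Dirichlet case is that the Neumann Laplacian has the constants in its kernel, so when $\varphi$ is a constant equilibrium ($f(\varphi)=0$) the Hessian may degenerate along constant directions; the Fredholm-plus-analytic framework of \cite{4} nonetheless yields an exponent $\theta\in(0,\tfrac12]$ and a neighborhood on which $\|\Delta u + f(u)\|_{L^2}\ge|e_u-e_\varphi|^{1-\theta}$, the degeneracy being absorbed into the value of $\theta$. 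Once this inequality is available, Steps 3 and 4 of the proof of Theorem 1 apply unchanged: I integrate $-\frac{d}{dt}[H(t)-e_\varphi]^\theta\ge\theta C\{\|u_t\|_{H^1}+\|\Delta u+f(u)\|_{L^2}\}$ to obtain $\int_{t_N}^\infty\|u_t\|_{H^1}\,dt<\infty$, which forces convergence of $u$ to a single $\varphi\in\Sigma$, and then feed (\ref{39}) into Lemma 3 to read off the two decay regimes according to whether $\theta<\tfrac12$ or $\theta=\tfrac12$.
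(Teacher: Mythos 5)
Your proposal is correct and takes essentially the same route as the paper, which treats Theorem 5 by repeating the proof of Theorems 1 and 2 (energy identity, Lemma 1, the Lyapunov functional $H(t)$ with bound (\ref{24}), then Steps 3--4) with the {\L}ojasiewicz--Simon inequality adapted to the Neumann setting. Your verification of the vanishing boundary terms and your handling of the possible degeneracy of the linearization at constant equilibria are in fact more explicit than anything the paper supplies.
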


\subsection{Dynamical boundary conditions}

Let $\Omega$ be a bounded, connected set in $\mathbb{R}^{N}, N \geq 1$ with smooth boundary $\partial\Omega$. The exterior normal on $\partial\Omega$ is denoted by $\nu$. We consider the following smilinear wave equation
\begin{eqnarray}
            \ddot{u} + h(t)\dot{u} - \Delta(u) = f(u) &\textrm{in $\mathbb{R}^{+} \times \Omega$;} \label{60} 
\end{eqnarray}
subject to the dynamical boundary condition
\begin{eqnarray}
    \partial_{\nu}u + u + \dot{u}= 0 &\textrm{on $\mathbb{R}^{+} \times\partial\Omega$;}  \label{61}
\end{eqnarray}
and the initial condition
\begin{eqnarray}
    u(0,x) = u_{0}(x), \dot{u}(0,x) = u_{1}(x) &\textrm{in $\Omega$.}  \label{62}
\end{eqnarray}
Here $f$ satisfies the assumption 2 in section 2, and $h$ is integrally positive or in the positive-negative case.

We equip $H^{1}(\Omega)$ with norm
\[
    \| u \|_{H^{1}} = ( \int_{\Omega} |\nabla u|^{2} \mathrm{d}x + \int_{\partial\Omega} u^{2}\mathrm{d}S )^{\frac{1}{2}}
\]

Denote
\begin{eqnarray*}
    \mathcal{D} = \{ (u, v)^{T} \in H^{2}(\Omega) \times H^{1}(\Omega):  \partial_{\nu}u + u + v |_{\partial\Omega}= 0 \}
\end{eqnarray*}
which is clearly a closed subspace of $ H^{2}(\Omega) \times H^{1}(\Omega)$.
For each solution $u$ of problem(1), we define its energy by
\begin{eqnarray*}
    E_{u}(t) =  \int_{\Omega}\frac{1}{2}( |u_{t}|^{2} + |\nabla(u)|^{2} )\mathrm{d}x + \frac{1}{2} \int_{\partial\Omega} u^{2}\mathrm{d}S- F(u)\mathrm{d}x
\end{eqnarray*}
where $F(u)\triangleq \int_{0}^{u}f(s)\mathrm{d}s$. In addition, we denote
\begin{eqnarray*}
    e_{u}(t)=\int_{\Omega}\frac{1}{2}|\nabla(u)|^{2}\mathrm{d}x + \frac{1}{2} \int_{\partial\Omega} u^{2}\mathrm{d}S  - F(u)\mathrm{d}x.
\end{eqnarray*}
If there is no need to specify $u$, we simplify $E_{u}(t), e_{u}(t)$ by $E(t), e(t)$ respectively.

In this case we have the following
\begin{main}
     For any initial data $(u_{0}, u_{1})^{T} \in \mathcal{D}$,  problem (\ref{60})--(\ref{62}) admits a unique global solution
     \[
        (u, u_{t})^{T} \in C([0, T]; \mathcal{D})\cap C^{1}([0, T]; \mathcal{H}).
     \]
     Moreover, $(u, u_{t})^{T}$ converges to an equilibrium $(\psi, 0)^{T}$ in the topology of $\mathcal{H}$ as time goes to infinity, i.e.,
    \begin{eqnarray*}
        \lim_{t \rightarrow \infty}{\|u_{t}\| + \|u - \psi\|_{H^{1}}} = 0.
    \end{eqnarray*}
\end{main}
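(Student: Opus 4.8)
The plan is to follow the scheme of the proof of Theorem 1 step by step, adapting every ingredient to the dynamic boundary condition $\partial_{\nu}u + u + u_{t}=0$. First I would establish well-posedness: local existence and uniqueness in $C([0,T];\mathcal{D})\cap C^{1}([0,T];\mathcal{H})$ (with $\mathcal{D}$ as redefined in this subsection) follow by a Galerkin approximation together with the Lipschitz character of $f$ on bounded sets guaranteed by Assumption 2, exactly as for Proposition 1, the only change being that Green's formula now produces a boundary contribution. Global existence then reduces to an a priori bound, which comes from the energy identity obtained by multiplying the equation by $u_{t}$, integrating over $\Omega$, and inserting $\partial_{\nu}u=-u-u_{t}$ on $\partial\Omega$:
\begin{eqnarray*}
    E_{u}'(t) = -h(t)\int_{\Omega}|u_{t}|^{2}\,\mathrm{d}x - \int_{\partial\Omega}|u_{t}|^{2}\,\mathrm{d}S .
\end{eqnarray*}
Under the standing sign assumption one has $-\int_{\Omega}F(u)\geq 0$, so that $E_{u}$ is non-increasing and controls $\|u_{t}\|_{L^{2}}^{2}+\|\nabla u\|_{L^{2}}^{2}+\|u\|_{L^{2}(\partial\Omega)}^{2}$; this yields the uniform $H^{1}$-type bound and, via elliptic regularity for the Robin problem, a uniform $H^{2}$ bound preventing blow-up.

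The second step is the analogue of Lemma 1. Setting $L=\limsup_{t\to\infty}\|u_{t}\|_{L^{2}}^{2}$ and arguing by contradiction that $L>0$, I would integrate $E_{u}'$ against the integrally positive (respectively positive--negative) damping $h$ precisely as in Steps 1--2 of the proof of Lemma 1; the extra boundary dissipation term only reinforces the contradiction. This gives $\|u_{t}\|_{L^{2}(\Omega)}\to 0$, and the boundary dissipation term additionally forces $\|u_{t}\|_{L^{2}(\partial\Omega)}\to 0$.

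The decisive ingredient, and the \emph{main obstacle}, is the correct {\L}ojasiewicz--Simon inequality for $e_{u}(t)=\frac12\|\nabla u\|_{L^{2}}^{2}+\frac12\|u\|_{L^{2}(\partial\Omega)}^{2}-\int_{\Omega}F(u)$. Here the equilibria $\psi$ solve $-\Delta\psi=f(\psi)$ in $\Omega$ with the Robin condition $\partial_{\nu}\psi+\psi=0$ on $\partial\Omega$, and the Fr\'echet derivative of $e_{u}$ on $H^{1}(\Omega)$ is $\langle e_{u}'(u),v\rangle=\int_{\Omega}(-\Delta u-f(u))v+\int_{\partial\Omega}(\partial_{\nu}u+u)v$, so its gradient carries both an interior residual $-\Delta u-f(u)$ and a boundary residual $\partial_{\nu}u+u$. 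I would invoke the abstract analytic {\L}ojasiewicz--Simon theorem of \cite{4} (cf. \cite{5}) for this functional, the work being to verify its hypotheses in the present framework: analyticity of $e_{u}$ near $\psi$ (from Assumption 2) and the Fredholm property of the linearization $v\mapsto -\Delta v-f'(\psi)v$ equipped with the boundary operator $\partial_{\nu}+1$. The outcome is an inequality of the form
\[
    \|-\Delta u-f(u)\|_{L^{2}(\Omega)} + \|\partial_{\nu}u+u\|_{L^{2}(\partial\Omega)} \geq |e_{u}-e_{\psi}|^{1-\theta}
\]
for $\|u-\psi\|$ small, with $\theta\in(0,\tfrac12]$. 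Getting the function-space realization of the Laplacian with the dynamic/Wentzell boundary condition right is exactly where the care is needed; what makes the adaptation work smoothly is the observation that \emph{along the trajectory} $\partial_{\nu}u+u=-u_{t}$ on $\partial\Omega$, so the boundary part of the gradient is automatically controlled by the boundary dissipation already available from the energy identity.

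With these two lemmas the endgame copies Steps 2--3 of the proof of Theorem 1. I would introduce a Lyapunov functional $H(t)$ built from $E_{u}(t)$ by the same $\varepsilon$-correction terms as in \eqref{23} (with a boundary contribution), tuned so that for $\varepsilon$ small and $t$ large one has $H(t)\geq e_{\psi}$ and $-H'(t)\geq C\{\|u_{t}\|_{H^{1}}^{2}+\|-\Delta u-f(u)\|_{L^{2}}^{2}+\|u_{t}\|_{L^{2}(\partial\Omega)}^{2}\}$, while $H(t)\to e_{\psi}$ along a sequence $t_{n}$ realizing a point of the $\omega$-limit set. Combining this differential inequality with the {\L}ojasiewicz--Simon inequality gives $-\frac{\mathrm{d}}{\mathrm{d}t}[H(t)-e_{\psi}]^{\theta}\geq C\{\|u_{t}\|_{H^{1}}+\dots\}$; integrating over $[t_{N},\infty)$ shows $\int_{t_{N}}^{\infty}\|u_{t}\|_{H^{1}}\,\mathrm{d}t<\infty$, so the trajectory has finite length in the boundary-inclusive $H^{1}$ norm and $u(t)$ converges in $H^{1}$ to some limit. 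Since the uniform $H^{2}$ bound and the compact embedding $H^{2}\hookrightarrow\hookrightarrow H^{1}$ make the trajectory precompact, the limit lies in $\Sigma$, and together with $\|u_{t}\|_{L^{2}}\to 0$ from the Lemma 1 analogue this yields $\lim_{t\to\infty}\big(\|u_{t}\|+\|u-\psi\|_{H^{1}}\big)=0$, as asserted.
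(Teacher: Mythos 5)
Your proposal is correct and takes essentially the same route as the paper: the paper's own proof is just the remark that the argument of Theorems 1 and 2 carries over once Lemma 2 is replaced by the generalized {\L}ojasiewicz--Simon inequality containing the boundary residual $\|\partial_{\nu}u+u\|_{L^{2}(\partial\Omega)}$ (Lemma 6, cited from \cite{19}), which is precisely the inequality you single out as the decisive ingredient. The only divergence is one of sourcing---you propose to verify the hypotheses of the abstract theorem of \cite{4}, \cite{5} directly rather than invoke \cite{19}---and your observation that $\partial_{\nu}u+u=-u_{t}$ along trajectories, so that the boundary residual is automatically controlled by the boundary dissipation, is a correct detail the paper leaves implicit.
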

Here $\psi$ is an an equilibrium to problem (\ref{60})--(\ref{62}), i.e. $\psi$ is a classical solution to the following nonlinear elliptic boundary value problem:
    \begin{eqnarray}
    \left
        \{
            \begin{array}{ll}
                -\triangle \psi = f(\psi) &\textrm{in $\Omega$;} \\
                \partial_{\nu}\psi + \psi =0   &\textrm{on $\partial\Omega$;} \label{63}
            \end{array}
    \right.
    \end{eqnarray}

\begin{remark}
    As we know, the time-dependent damping defined in definition 2 may vanish in some interval. However due to the boundary dissipation, we can also get the convergence of the solutions. This case will be studied deeply in \cite{20}.
\end{remark}

\begin{lemma}
    Let $\psi$ is an an equilibrium to problem (\ref{60}), then there exist $\theta \in (0, \frac{1}{2}]$ and $\delta > 0$ such that
    $\forall u\in H^{2}$, $ \|u - \psi\|_{H^{2}} < \delta$,
        \begin{eqnarray*}
            \|-\triangle u + f(u)\|_{L^{2}(\Omega)} + \|\partial_{\nu} u + u\|_{L^{2}(\partial\Omega)} \geq \mid e_{u} - e_{\psi}\mid^{1-\theta}.
        \end{eqnarray*}
\end{lemma}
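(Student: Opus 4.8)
The plan is to recognize this inequality as a {\L}ojasiewicz--Simon gradient inequality for the energy $e_u$ and to reduce it to the abstract result behind Lemma 2 and Lemma 5, once the dynamical boundary condition is encoded in a suitable product Hilbert space. First I would set $X = L^{2}(\Omega) \times L^{2}(\partial\Omega)$, equipped with the inner product $\langle (\phi_{1}, \phi_{2}), (\chi_{1}, \chi_{2}) \rangle_{X} = \int_{\Omega} \phi_{1}\chi_{1}\,\mathrm{d}x + \int_{\partial\Omega} \phi_{2}\chi_{2}\,\mathrm{d}S$, and regard the functional
\[
    \mathcal{E}(u) = \frac{1}{2}\int_{\Omega} |\nabla u|^{2}\,\mathrm{d}x + \frac{1}{2}\int_{\partial\Omega} u^{2}\,\mathrm{d}S - \int_{\Omega} F(u)\,\mathrm{d}x = e_{u}
\]
as defined on $V = H^{1}(\Omega)$, each $u$ being identified with the pair $(u, u|_{\partial\Omega})$ through the trace map. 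A direct computation, integrating by parts, gives for every $v \in V$
\[
    \langle \mathcal{E}'(u), v \rangle = \int_{\Omega} (-\Delta u - f(u)) v\,\mathrm{d}x + \int_{\partial\Omega} (\partial_{\nu} u + u) v\,\mathrm{d}S,
\]
so that the $X$-gradient of $\mathcal{E}$ is exactly the pair $\nabla_{X}\mathcal{E}(u) = (-\Delta u - f(u),\, \partial_{\nu} u + u)$. Its $X$-norm is equivalent, up to a fixed constant, to the sum of the two $L^{2}$ norms on the left-hand side of the stated inequality, and it vanishes precisely when $u \in \Sigma$; in particular $\nabla_{X}\mathcal{E}(\psi) = 0$.

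With this reformulation the claimed estimate is the statement $\|\nabla_{X}\mathcal{E}(u)\|_{X} \geq |\mathcal{E}(u) - \mathcal{E}(\psi)|^{1-\theta}$ for $u$ close to $\psi$, which is the {\L}ojasiewicz--Simon inequality relative to the Gelfand triple $V \hookrightarrow X \hookrightarrow V'$. I would verify the two hypotheses of the abstract theorem of \cite{4} (see also \cite{5}). For analyticity, the quadratic part of $\mathcal{E}$ is a bounded symmetric bilinear form, hence real-analytic, while the Nemytskii term $u \mapsto \int_{\Omega} F(u)\,\mathrm{d}x$ is real-analytic on a neighborhood of $\psi$ in $V$ because $f$ is analytic with $f$, $f'$, $f''$ bounded on bounded sets by Assumption 2, combined with the Sobolev embedding of $H^{1}(\Omega)$ into the relevant $L^{q}(\Omega)$. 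For the Fredholm property, the second derivative $\mathcal{E}''(\psi)$ is realized by the linear operator $L v = (-\Delta v - f'(\psi) v,\, \partial_{\nu} v + v)$, a self-adjoint Robin-type elliptic operator; by elliptic regularity and the compactness of the embedding $V = H^{1}(\Omega) \hookrightarrow X$ (which combines the Rellich theorem with the compactness of the trace operator into $L^{2}(\partial\Omega)$), it has compact resolvent, finite-dimensional kernel and closed range, and is therefore Fredholm of index zero as a map $V \to V'$.

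Granting these two facts, the abstract {\L}ojasiewicz--Simon theorem applies verbatim: one performs a Lyapunov--Schmidt reduction onto the finite-dimensional kernel of $L$, obtaining a real-analytic function of finitely many variables to which the classical {\L}ojasiewicz inequality furnishes an exponent $\theta \in (0, \frac{1}{2}]$ and a radius $\delta > 0$, and the estimate lifts back to $\mathcal{E}$. Since $\|u - \psi\|_{H^{2}} < \delta$ controls $\|u - \psi\|_{V}$, the lemma follows. I expect the main obstacle to be the bookkeeping needed to treat the interior and boundary contributions on an equal footing: one must be sure that the product space $X = L^{2}(\Omega) \times L^{2}(\partial\Omega)$ is the correct space in which to take the gradient, so that both components of $\nabla_{X}\mathcal{E}$ enter the inequality with their natural $L^{2}$ norms, and that the Robin realization $L$ is genuinely self-adjoint and Fredholm in this setting rather than in the more familiar Dirichlet one used for Lemma 2.
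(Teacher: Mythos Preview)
Your approach is correct and is in fact more detailed than what the paper provides: the paper does not prove Lemma~6 at all but simply quotes it as ``the generalized {\L}ojasiewicz--Simon inequality established in \cite{19}'' (Hao Wu's paper on the damped semilinear wave equation with dissipative boundary condition). Your reformulation via the product space $X = L^{2}(\Omega)\times L^{2}(\partial\Omega)$, identification of $\nabla_{X}\mathcal{E}(u) = (-\Delta u - f(u),\,\partial_{\nu}u + u)$, and reduction to the abstract {\L}ojasiewicz--Simon theorem through analyticity of the Nemytskii map and the Fredholm property of the linearized Robin operator is exactly the scheme carried out in \cite{19}, so there is no genuine difference in route.

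One small caveat worth tightening: the analyticity of $u\mapsto\int_{\Omega}F(u)\,\mathrm{d}x$ as a functional on $V=H^{1}(\Omega)$ is delicate in high dimension, since $H^{1}$ does not embed into $L^{\infty}$ for $N\ge 3$. The paper sidesteps this by working in an $H^{2}$-neighborhood of $\psi$ (so $u\in L^{\infty}$ when $N\le 3$, which is the implicit setting of Section~4.5), and \cite{19} handles it under a critical-growth hypothesis. Your closing sentence already invokes $\|u-\psi\|_{H^{2}}<\delta$, so you are on safe ground, but you should state explicitly that the analyticity and the gradient computation are being verified on $H^{2}(\Omega)$ rather than on all of $H^{1}(\Omega)$.
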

The proof of Theorem 4 is the same as Theorem 1 and 2, but we have to use the generalized {\L}ojasiewicz-Simon inequality(Lemma 6) established in [19].
\\


\begin{thebibliography}{20}

\bibitem{1}  G. Fragnelli and D. Mugnai, \emph{Stablity of solutions for some classes of nonlinear damped wave equations}, SIAM J. Control Optim., \textbf{47}(2008), 2520-2539.
\\
\bibitem{2} A. Haraux, M. Jendoubi, \emph{Convergence of bounded weak solutions of the wave equation with dissipation and analytic nonlinearity}, Calc. Var. Partial Differ. Equ., \textbf{9}(1999), 95-124.
\\
\bibitem{3} A. Haraux, M. Jendoubi, \emph{Decay estimates to equilibrium for some evolution equations with an analytic nonlinearity}, Asymptotic Analysis \textbf{26}(2001), 21-36.
\\
\bibitem{4} L. Simon, \emph{Asymptotics for a class of nonlinear evolution equations, with applications to geometric problems}, Annals of Mathematics, \textbf{118}(1983), 525-571.
\\
\bibitem{5}  M. Jendoubi, \emph{A simple unified approach to some convergence theorems of L. Simon}, J. of Funct. Anal., \textbf{153}(1998), 187-202.
\\
\bibitem{6}  H. Levine, S. Park and J. Serrin, \emph{Global existence and global nonexistence of solutions of the Cauchy problem for a nonlinearly damped wave equation}, J. Math. Anal. Appl., \textbf{228}(1998), 181-205.
\\
\bibitem{7}  A. Haraux, P. Martinez and J. Vancostenoble, \emph{ Asymptotic stablity for intermittently controlled second-order evolution equations}, SIAM J. Control Optim., \textbf{43}(2005), 2089-2108.
\\
\bibitem{8}  M. Jendoubi, \emph{Convergence of global and bounded solutions of the wave equation with linear dissipation and analytic nonlinearity}, J. of Diff. equ., \textbf{144}(1998), 302-312.
\\
\bibitem{9}  P. Martinez, \emph{Precise Decay Rate estimates for Time-dependent dissipative systems}, Israel J. Math.,
    \textbf{119}(2000), 291-324.
\\
\bibitem{10}  Daoulatli, \emph{Rates of decay for the wave systems with time dependent damping}, Disc. and Conti. Dyna. Syst.,
   \textbf{31}(2011), no. 2, 407-443.
\\
\bibitem{11} B. Zhang, \emph{Asymptotic behavior of solutions of a nonlinear damped wave equation}, Diff. Eqn. Dynam. System, \textbf{2}(1994), no. 3, 173-204.
\\
\bibitem{12} Alabau Boussouira, \emph{A general formula for decay rates of nonlinear dissipative systems}, C. R. Math. Acad. Sci. Paris, \textbf{338}(2004), no. 1, 35-40.
\\
\bibitem{13} S. Cox, E. Zuazua, \emph{The rate at which energy decays in a damped string}, Comm. Partial Differential Equations, \textbf{19}(1994), no. 1-2, 213-243.
\\
\bibitem{14} P. Pucci, J. Serrin, \emph{Asymptotic stability for intermittently controlled nonlinear oscillators}, SIAM J. Math. Anal., \textbf{25}(1994), no. 3, 815-83.
\\
\bibitem{15} M. Bellassoued, \emph{Decay of solutions of the wave equation with arbitrary localized nonlinear damping}, J. of Diff. equ.,  \textbf{211}(2005), no. 2, 303-332.
\\
\bibitem{16} I. B. Hassen, A. Haraux, \emph{Convergence and decay estimates for a class of second order dissipative equations involving a non-negative potential energy}, J. of Funct. Anal., \textbf{260}(2011), 2933-2963.
\\
\bibitem{17} H. A. Levine, \emph{Some additional remarks on the nonexistence of global solutions to nonlinear wave equations}, SIAM J. Math. Anal., \textbf{5}(1974), 138-146.
\\
\bibitem{18}  Igor. Chueshov, M. Eller, and Irena Lasiecka, \emph{On the attractor for a semilinear wave equation with critical exponent and nonlinear boundary dissipation}, Commu. Partial Diff. Equ., \textbf{27}, no. 9, 1901-1951(2002).
\\
\bibitem{19}  Hao Wu, \emph{Convergence to equilibrium for the damped semilinear wave equation with critical exponent and dissipative condition}, Quarterly of applied math., \textbf{LXIV}, no. 1, 167-188(2006).
\\
\bibitem{20}  Zhe Jiao, \emph{Stabilization of the wave equation by on-off feedbacks}, Preprint.

\end{thebibliography}
\end{document}